\renewcommand{\tilde}{\widetilde}
\renewcommand{\phi}{\varphi}
\newcommand{\argdot}{\mathbf{\bullet}}
\newcommand{\N}{\mathbb{N}}
\newcommand{\R}{\mathbb{R}}
\newcommand{\norm}[1]{\left\lVert #1 \right\rVert}
\newcommand{\p}[1]{\left( #1 \right)}
\newcommand{\argmin}{\operatorname{\arg}\min}
\newaliascnt{conj}{theorem}
\newaliascnt{cor}{theorem}
\newaliascnt{lemma}{theorem}
\newaliascnt{prop}{theorem}
\newaliascnt{definition}{theorem}
\newaliascnt{example}{theorem}
\newaliascnt{notation}{theorem}
\newaliascnt{experiment}{theorem}
\newaliascnt{assumption}{theorem}
\renewcommand{\argmin}{\operatorname*{argmin}}
\newtheorem{example}{Example}[section]
\newtheorem{assumption}{Assumption}[section]
\newcommand{\figspace}{\vspace{-0.4cm}}
\newcommand{\PC}{\mathrm{PC}[0,1]}
\newcommand{\PCk}{\mathrm{PC}^k[0,1]}
\newcommand{\indfunc}{\mathbf{1}}
\newcommand{\de}{\,\mathrm{d}}
\newcommand{\numel}[1]{\# #1}
\newcommand{\noop}{}
\title{ The $L^1$-Potts functional for robust jump-sparse reconstruction.}
\author{Andreas Weinmann\footnotemark[1], Martin Storath\footnotemark[2], and Laurent Demaret\footnotemark[1]}
\date{{ \today}}
\renewcommand{\thefootnote}{\fnsymbol{footnote}}
\begin{document}

\maketitle

\renewcommand{\thefootnote}{}
\footnotetext{
{\em AMS 2010 subject classifications.} 65K05, 65K10, 65Y20, 49J52, 90C39, 93E14, 62G08.  

\noindent{\em Keywords.} Potts functional, jump-sparse reconstruction, non-convex functional, penalized absolute deviation, adaptive estimation.

 }

\begin{abstract}
We investigate the non-smooth and non-convex $L^1$-Potts functional in discrete and continuous time. We show $\Gamma$-convergence of discrete $L^1$-Potts functionals
towards their continuous counterpart and obtain a convergence statement for the corresponding minimizers as the discretization gets finer.
For the discrete $L^1$-Potts problem, we introduce an $O(n^2)$ time and $O(n)$ space algorithm 
to compute an exact minimizer.
We apply $L^1$-Potts minimization to the problem of recovering piecewise constant signals from noisy measurements $f.$
It turns out that the $L^1$-Potts functional has a quite interesting blind deconvolution property. 
In fact, we show that mildly blurred jump-sparse signals are reconstructed by minimizing the $L^1$-Potts functional.
Furthermore, for strongly blurred signals and known blurring operator, we derive an iterative reconstruction algorithm.

\end{abstract}

\section{Introduction}

\begin{figure}
\centering
\def\noiseLevel{0.1}
\def\thisfigheight{0.3\textwidth}
\def\thisfigwidth{0.43\textwidth}
\def\thisdatafile{Experiments/introduction/pottsIntroLaplace\noiseLevel.table}
\setlength{\tabcolsep}{1pt}
\begin{tabular}{rrr}
%\noexp
   	\begin{tikzpicture}
      \begin{axis}[
MyAxisStyle,  
ymin=-0.4,
ymax=1.3,
title=Laplacian noise
        ]
        \pgfplotsset{every axis plot post/.append style={line width = 0.5pt}} ;
        \addplot+[only marks, mark size=0.5] table[x index=0,y index=2] {\thisdatafile};   
       
     \end{axis}
    \end{tikzpicture}
    &
    %\noexp
     \def\thisdatafile{Experiments/introduction/pottsIntroGauss\noiseLevel.table}
     \begin{tikzpicture}
      \begin{axis}[
      MyAxisStyleB,
      ymin=-0.4,
ymax=1.3,
      title=Gaussian noise
        ]
        \pgfplotsset{every axis plot post/.append style={line width = 0.5pt}} ;
        \addplot+[only marks, mark size=0.5] table[x index=0,y=noisy] {\thisdatafile};      
     \end{axis}
         \end{tikzpicture}
     &
     %\noexp
     \def\thisdatafile{Experiments/introduction/pottsIntroSaP0.25.table}
     \begin{tikzpicture}
      \begin{axis}[
      MyAxisStyleB,
      ymin=-0.4,
ymax=1.3,
      title=Impulsive noise
        ]
        \pgfplotsset{every axis plot post/.append style={line width = 0.5pt}} ;
        \addplot+[only marks, mark size=0.5] table[x=x,y index = 2] {\thisdatafile};   
     \end{axis}
    \end{tikzpicture}
    \\
    \def\thisdatafile{Experiments/introduction/pottsIntroLaplace\noiseLevel.table}
   	%\noexp
	\begin{tikzpicture}
      \begin{axis}[
      MyAxisStyle
        ]
        \pgfplotsset{every axis plot post/.append style={line width = 0.5pt}} ;
        \addplot+[only marks, mark size=0.5] table[x index=0,y = pottsL1] {\thisdatafile};   
        \addplot+[no marks, dashed] table[x index=0,y index=1] {\thisdatafile};   
     \end{axis}
    \end{tikzpicture}
    &
    %\noexp
     \def\thisdatafile{Experiments/introduction/pottsIntroGauss\noiseLevel.table}
     \begin{tikzpicture}
      \begin{axis}[
      MyAxisStyleB
        ]
        \pgfplotsset{every axis plot post/.append style={line width = 0.5pt}} ;
        \addplot+[only marks, mark size=0.5] table[x index=0,y = pottsL1] {\thisdatafile};    
        \addplot+[no marks, dashed] table[x index=0,y=original] {\thisdatafile};   
     \end{axis}
         \end{tikzpicture}
     &
     %\noexp
     \def\thisdatafile{Experiments/introduction/pottsIntroSaP0.25.table}
     \begin{tikzpicture}
      \begin{axis}[
      MyAxisStyleB
        ]
        \pgfplotsset{every axis plot post/.append style={line width = 0.5pt}} ;
        \addplot+[only marks, mark size=0.5] table[x=x,y = pottsL1] {\thisdatafile};   
        \addplot+[no marks, dashed] table[x=x,y index = 1] {\thisdatafile};   
     \end{axis}
    \end{tikzpicture}
    \end{tabular}
    \figspace
    \caption{Top row: A piecewise constant signal blurred by the moving average of width $7$ and corrupted by Laplacian noise ($\sigma = \noiseLevel$), Gaussian noise ($\sigma = \noiseLevel$) or impulsive noise (with $25 \%$ of the data destroyed). 
    Bottom row: 
   The original signal (dashed line) is almost perfectly recovered by the respective minimizers of the $L^1$-Potts functional \eqref{eq:potts_L1_discFir} for all three types of noise.
    }
    \label{fig:introductory_example}
    \figspace
\end{figure}
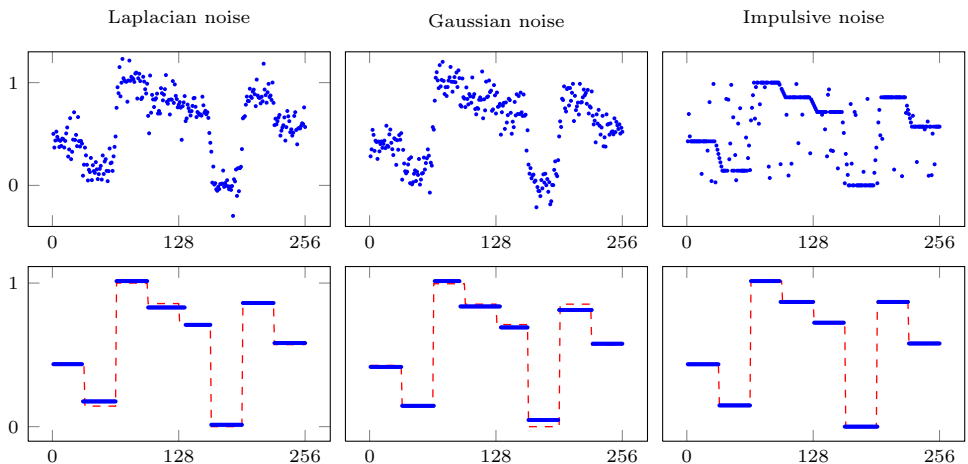

In this article, we study the \emph{$L^1$-Potts} functional 
\begin{equation}\label{eq:potts_L1_discFir} 
	P_\gamma(u) = \gamma \cdot \| \nabla u \|_0 + \|  u - f \|_1 = \gamma \cdot J(u) + \sum\nolimits_i |u_i - f_i| 
\end{equation}
and its continuous time counterpart.
Here the regularity term $J(u) = \| \nabla u \|_0$ counts the number of jumps of $u.$ 
(The abusive but appealing norm notation $\|\cdot\|_0$ has been popularized by the sparsity community \cite{elad2010sparse}.)
A minimizer of the $L^1$-Potts functional may be seen as a {\em jump-sparse} approximation to the data $f.$ 

The investigation of $L^2$-Potts functionals (with the $L^1$ norm in \eqref{eq:potts_L1_discFir} replaced by $\|\cdot\|_2^2$) can be traced back to R. Potts \cite{potts1952some} in 1952 and even to E. Ising \cite{ising1925beitrag} in 1925 in a
physical context. S. Geman and D. Geman \cite{geman1984stochastic} were the first
to consider such functionals in the context of image processing.
For a thorough account on the history we refer to \cite{winkler2005don} and references therein.
More recent results can be found in \cite{wittich2008complexity, boysen2009consistencies}.

Our investigation of $L^1$-Potts functionals is motivated by a signal processing problem; namely,
the reconstruction of piecewise constant signals from noisy and blurred spatial measurements $f$.
Recovery of piecewise constant signals from corrupted data is a task needed in various fields of applied sciences.
Examples are the reconstruction of brain stimuli \cite{winkler2005don}, the cross-hybridization of DNA \cite{drobyshev2003specificity, hupe2004analysis}
and MALDI imaging \cite{schoenmeyer2011automated}. 
The data may be incomplete frequency information as in \cite{candes2006robust, march2000reconstruction} 
or noisy and blurred measurements as in this paper. Frequently, the type of noise is unknown or known to be non-Gaussian.

The above reconstruction task is an inverse problem which is typically approached by minimization of an energy functional reflecting a tradeoff between data fidelity and regularity. 
In this regard, classical Tykhonov approaches 
favor oversmoothed solutions and are not robust to non-Gaussian noise. 
To overcome these limitations, various functionals have been proposed; cf. \cite{nikolova2003minimizers}.
In particular, $L^1$-TV and related minimization problems have gained a lot of interest in recent years, see e.g. \cite{clason2009duality,chambolle2011first,fu2006efficient, yang2009efficient} and references therein.
Under the prior assumption of jump-sparse signals, the number of jumps of $u$ is a more natural penalty than the total variation $\|\nabla u \|_1.$
This directly leads to the minimization of the $L^1$-Potts functional \eqref{eq:potts_L1_discFir}.
The numerical experiments in this paper indicate that $L^1$-Potts minimization outperforms $L^1$-TV minimization
whenever the underlying signal is jump-sparse. 
Furthermore, the
 $L^1$-Potts functional 
 is more robust to non-Gaussian noise than 
the $L^2$-Potts functional 
and yields comparable results for Gaussian noise.

The main contributions of this article are: (i) Development of a discretization framework for the continuous $L^1$-Potts functional;
(ii) Development of a fast algorithm to minimize the discretized $L^1$-Potts functionals; (iii) Proof that 
both continuous and discretized $L^1$-Potts functionals have blind deconvolution properties.

As for (i) we consider (possibly non-equidistant) samplings $f_k$ of continuous time data $f$. We show that the 
discrete $L^1$ Potts functionals associated with $f_k$ $\Gamma$-converge to the continuous $L^1$-Potts functional
as the sampling density gets finer. 
Furthermore, we show that each sequence of minimizers of the discrete functionals has an accumulation point
and that each such accumulation point is a minimizer of the continuous model. 
The precise formulation of these statements is \autoref{thm:gamma_convergence_of_discrete_potts} which is the main result of 
\autoref{sec:gamma-convergence}.
We in particular show
that the continuous Potts functional has a minimizer. (This contrasts the situation for general inverse Potts functionals,
where the continuous model need not have minimizers, but the discrete models do; cf. \cite{demaret2012deconvolution}.)  
 $\Gamma$-convergence and convergence of minimizers in relation with the $L^2$-Potts functional are topics of 
 \cite{boysen2009consistencies, boysen2007scale}. In these papers, 
$\Gamma$-convergence  is shown for a modified $L^2$-Potts functional. This approach
relies on the Hilbert space structure of $L^2$, and therefore does not directly carry over to the $L^1$ context.

Concerning (ii) we notice that the non-convexity of the jump penalty in \eqref{eq:potts_L1_discFir} makes 
standard techniques for convex optimization as used for $L^1$-TV minimization inaccessible. 
Instead, dynamic programming is used \cite{mumford1985boundary,chambolle1995image}.
A fast algorithm for $L^2$-Potts minimization has been introduced by F. Friedrich et al. and can be found 
in \cite{friedrich2008complexity}. This algorithm computes an exact minimizer in $O(n^2)$
time and $O(n)$ space, where $n$ denotes the length of the discrete data.
In their paper, 
they also consider the discrete $L^1$-Potts model.
Using a red-black tree approach they derive an $O(n^2 \log n)$ time algorithm to minimize \eqref{eq:potts_L1_discFir}
which improves the time complexity of a naive implementation by a factor of $n.$ 
However, this comes with $O(n^2)$ space consumption, thus losing a factor of $n$ 
against the naive implementation. 
V. Liebscher \cite{liebscher2011geiz} posed the question whether it is possible to minimize the $L^1$-Potts functional as fast as the classical $L^2$ version.
In this paper, we give an affirmative answer. 
We show that the $L^1$-Potts algorithm proposed computes an exact minimizer within $O(n^2)$ time and $O(n)$ space.
This statement is formulated as \autoref{thm:timeSpaceComplNonAqui} which, more generally, treats the case of not necessarily equal weights.
Such weights arise naturally in applications where data are available only on a non-uniform grid \cite{schoenmeyer2011automated}.
The key to the speed-up is to work with dynamic data structures tailored to the problem.
Besides this asymptotic result, the actual runtime of the $L^1$ algorithm is only less than $20 \%$ slower than that of the $L^2$ version, for larger data size. 
The algorithm and our results as well as numerical experiments
comparing the $L^1$-Potts algorithm with the $L^2$-Potts algorithm of \cite{friedrich2008complexity}
and the $L^1$-TV algorithm of \cite{clason2009duality} under various types of noise
are presented in \autoref{sec:alogrithm}. An efficient implementation is available for download  at \url{http://pottslab.de}.

As for (iii) we observe that besides its high robustness to noise, the  $L^1$-Potts functional has
blind deconvolution capabilities which are not shared by the $L^2$-Potts and the $L^1$-TV functional; cf. \autoref{fig:potts_deconv_small}.
In fact, we show that the continuous time $L^1$-Potts functional 
is able to exactly recover piecewise constant signals $g$ from mildly blurred measurements $K \ast g $ 
\emph{without} knowledge of the (narrowly supported) convolution kernel $K$.
Further, this property is inherited by the discretizations of the $L^1$-Potts functional.
The precise statements are formulated  as  \autoref{thm:deconvolution_main_extended} and \autoref{thm:deconvolution_main_extended_discr}.
Experiments indicate that the deconvolution property persists also in the presence of various types of noise; see \autoref{fig:introductory_example}.
Theoretical results and experiments in connection with blind deconvolution are the topic of \autoref{sec:smallSizeKernels}.

Finally, for the reconstruction of severely blurred signals (and known kernel), we consider the related deconvolution problem
\begin{align}\label{eq:potts_L1Conv}
	\gamma \cdot \| \nabla u \|_0 + \norm{K \ast u - f}_1 \to \min{}.   %\tag{K-$L^1$-Potts}
\end{align}
Theoretical properties of the continuous $L^2$ variant of \eqref{eq:potts_L1Conv} have been investigated in \cite{boysen2009jump}. However, to our knowledge,  no algorithms  to solve either  \eqref{eq:potts_L1Conv} or the corresponding $L^2$-variant have been proposed yet.
In \autoref{sec:largeSizeKernels} we derive a heuristic to solve \eqref{eq:potts_L1Conv} where our fast algorithm to solve \eqref{eq:potts_L1_discFir} is employed as basic building block of an iterative algorithm; for results see \autoref{fig:potts_deconv_large}.

%%%%%%%%%%%%%%%%%%%%%%%%%%%%%%%
\section{Continuous Potts model and $\Gamma$-convergence}\label{sec:gamma-convergence}
%%%%%%%%%%%%%%%%%%%%%%%%%%%%%%%

In this section, we consider the continuous $L^1$-Potts functional defined on  $L^1[0,1]$ by
\begin{align}\label{eq:potts_continuous_L1}
	P_\gamma(u) =
		\gamma \cdot J(u) + \norm{u-f}_1, \qquad \text{if $u \in \PC$}
\end{align}
and by $P_\gamma(u) = \infty$ else. Here $\PC$ denotes the space of piecewise constant functions on the interval $[0,1].$ 
{\noop We identify a piecewise constant function with its equivalence class 
(under identification of absolutely integrable functions which are equal almost everywhere w.r.t.\ Lebesgue measure)
in order to view it as an element of $L^1[0,1].$ The resulting subspace of piecewise constant functions equals precisely 
those elements in $L^1[0,1]$ whose distributional derivatives are (finite) linear combinations of Dirac distributions centered 
in 
$(0,1).$
In view of this equivalent definition, one may also define the symbol $J(u) = \|\nabla u\|_0$ as the number of points in
the support of the distributional derivative of the element $u \in L^1[0,1].$}

We start with some basics on $L^1$-Potts functionals and
emphasize the non-unique\-ness of minimizers.
Then we have a look at discretizations of the $L^1$ Potts functional.  Our final goal is to obtain convergence statements of the discretizations to the continuous model. 
As an intermediate step, 
{\noop we investigate the convergence of semi-discrete Potts functionals.
In order to define these functionals, we consider a nested sequence $X^k \subset X^{k+1}$ of finite sets in $[0,1]$
which we call \emph{discretization sets}. 
We assume that the union of the nested sets $X^k$ is dense in $[0,1].$
(The number $n_k$ of elements in the $k$th level discretization $X^k$ is not required to equal $k,$ but we assume that $n_k$ is strictly increasing.)
We use the notation $\PCk$ to denote the space of those piecewise constant functions which only jump in the $k$th level discretization set $X^k.$
Then the sequence of semi-discrete Potts functionals $(Q_\gamma^k)$ (associated with the discretizations $X^k$) is given by
\begin{align}\label{eq:potts_semidiscrete}
	Q_\gamma^k(u) =
		\gamma \cdot J(u) + \norm{u-f}_1, \qquad \text{if $u \in \PCk$} 
\end{align}
and by $Q^k_\gamma(u) = \infty$ else.} 
%Here, $\PCk$ denotes the space of those piecewise constant functions which only jump in a finite set $X^k.$
%The \emph{discretization sets}  form  a nested sequence such that their union is dense in $[0,1].$ 
The auxiliary functionals $(Q_\gamma^k)$ are still defined by data $f$ defined {\noop on the continuum $[0,1].$} 
However, in practice, data are given by discrete samples 
$	f^k = S_k f $
obtained from $f$ by a sampling operator $S_k$ at some level $k.$ 
One implementation of $S_k$ might be the integral sampling
\begin{align}\label{eq:sampling_integral}
	f^k(j) = S_k f(j) =   \tfrac{1}{|I_j|}  \int_{I_j} f \de \lambda,  
\end{align}
where $I_j$ are the intervals given by the discretization sets $X^k.$
If $f$ is continuous, one might also consider point sampling
\begin{align}\label{eq:sampling_point}
	f^k(j) = S_k f(j) = f(x_j),  
\end{align}
where $x_j$ might be taken as the midpoint of the interval $I_j.$
Then, the discrete Potts functionals read
	\begin{align} \label{eq:potts_fully_discrete}
	P_\gamma^k(u) =
		\gamma \cdot J(u) + \|u-\sum\nolimits_j S_k f(j) \cdot \indfunc_{I_j}\|_1, \qquad \text{if $u \in \PCk$},
\end{align}
and $P^k_\gamma(u) = \infty$ else.

In \autoref{ssec:gamma_convergence_semi} we obtain convergence statements (\autoref{thm:gamma_convergence_semi}) for these semi-discrete Potts functionals
which we then apply in \autoref{ssec:gamma_convergence} to obtain the corresponding statements for the (fully) discretized $L^1$ Potts functionals 
we are ultimately interested in. The main result is \autoref{thm:gamma_convergence_of_discrete_potts}.

\subsection{Basics on  $L^1$-Potts functionals}\label{ssec:basics_of_potts}

It is well known that a median $\mu$ minimizes the $L^1$ error for a constant approximation,
that is, 
\begin{align}\label{eq:minimizerPropertyMedian}
	\| g- \mu \|_1 \leq \|g-y\|_1, \quad\text{for all $y \in \R.$}
\end{align}
Thus, in order to find a minimizer $u$ of the $L^1$-Potts problem, 
it is sufficient to find its jump locations  $x_i.$
Then,  a corresponding minimizer is given by $u = \sum_{i} \mu_i \indfunc_{(x_{i}, x_{i+1})},$
where $\mu_i$ is a median of the data $f$ on the interval $(x_i, x_{i+1}).$
Recall that a \emph{median} $\mu$ of a real-valued function $f$
 with respect to some finite measure $\nu$ on $\Omega$ is characterized by the property
\begin{align}\label{eq:median_set}
  \nu (\{x : f(x) < \mu \})
    \leq \tfrac{1}{2} \nu(\Omega) \quad  \text{ and } \quad \nu (\left\{x : f(x) >  \mu \right\})   \leq  \tfrac{1}{2} \nu(\Omega).
\end{align}
{\noop In our case, we use weighted counting measures in the discrete time case and
 the Lebesgue measure on the interval in the continuous time case.
If $f$ is a continuous function and $\nu$ is the Lebesgue measure then there is only one unique median, whereas in general, 
there might be a whole interval of medians.} 

This non-uniqueness of the median is one source of non-uniqueness of minimizers of the (discrete and continuous) $L^1$-Potts functional.
Besides this, there are two other sources of non-uniqueness as  the following example shows.

\begin{example}\label{ex:nonuniqueness}
Consider the data $f = \indfunc_{[0,\frac14]} + \indfunc_{[\frac12,\frac 34]}$
on the interval $[0,1].$ There are three sources for non-uniqueness of the minimizers of the
$L^1$-Potts functional $P_\gamma$ associated with $f$ depending on the parameter $\gamma:$ 
First, for $\gamma > \frac{1}2,$ a minimizer of $P_\gamma$ is constant on $[0,1],$
any constant candidate $u_a(x) = a,$ $a \in [0,1],$  has  Potts value $P_\gamma =\frac{1}2.$
Here the reason for a non-unique minimizer is the non-uniqueness of the median.
Second, for $\gamma = \frac12,$ 
we easily check that the three-jump solution $u=f$ 
and the one-jump solution $u=\indfunc_{[0,\frac14]}$ attain the same minimal value.
Thus in that case, there is non-uniqueness with respect to the number of jumps.
Third, if $ \frac18 < \gamma < \frac14,$ 
both  $u_1 = \indfunc_{[0,\frac14]}$ and $u_2 = \indfunc_{[0,\frac34]}$
are minimizers of the $L^1$-Potts functional.
Here, the number of jumps is equal, but the minimizer is not unique with respect to the partitions.
{\noop (For completeness:  for $\gamma < \frac{1}{8}$ the minimizer coincides with data $f;$ for $\gamma = \frac{1}{8},$ possible minimizers are data $f$ as well as the one jump solutions $u_1$ and $u_2.$)}
\end{example}

Note that the first type of non-uniqueness is specific to the $L^1$-Potts functional. Indeed, since the mean value is unique, this kind of non-uniqueness does not occur for the $L^2$-Potts functional.
The other two types of non-uniqueness also arise in the $L^2$ case
but have been shown to occur, in the discrete setting, only on a negligible set of data $f$ and parameters $\gamma$
\cite{wittich2008complexity}. We strongly conjecture that analogous statements (modulo the non uniqueness of medians) can
be made for the $L^1$-Potts functional. However, a rigorous proof is out of the scope of this article.

\subsection{$\Gamma$ convergence of semi-discrete Potts functionals}\label{ssec:gamma_convergence_semi}

We recall that a sequence $(F^k)$ of functionals on $L^1[0,1]$
taking values in the extended positive real line $[0, \infty],$
is \emph{$\Gamma$-convergent} to a functional $F^\infty$ if 
\begin{enumerate}[(i)]
	\item for each $u \in L^1[0,1]$ and each sequence $(u^k)_{k \in \N}$ in $L^1[0,1]$
	with $u^k \to u$ as $k \to \infty$ holds 
	\begin{align}\label{eq:gamma_conv_1}
		F^\infty(u) \leq \liminf\nolimits_{k\to \infty} F^k(u^k),
	\end{align}
	\item and for each $u \in L^1[0,1]$ there is a \emph{recovery sequence} $ (u^k)_{k\in \N}$ in $L^1[0,1]$
	with $u^k \to u$ as $k \to \infty$ such that
	\begin{align}\label{eq:gamma_conv_2}
		F^\infty(u) \geq \limsup\nolimits_{k\to \infty} F^k(u^k).
	\end{align}
\end{enumerate}
{\noop The fundamental theorem of $\Gamma$-convergence is as follows.
\begin{theorem} \label{thm:fundGammaCon}
Let $(F^k)$ be a sequence of functions on a metric space $X$ with values on the extended real line.
Assume that $(F^k)$ is $\Gamma$-convergent to $F^\infty.$ 
If there is a compact set $K \subset X$ such that $\inf_X F^k = \inf_K F^k$ for all k, 
then $F^\infty$ has a minimizer and a minimizing value is given by $\lim\nolimits_k \inf\nolimits_X F^k.$
Moreover, if $(u^k)$ is a precompact sequence such that $\lim_k F^k(u^k) = \lim_k \inf_X F^k,$ then every limit of a subsequence of $(u^k)$ is a minimum point for $F^\infty.$
%In particular, if all $F^k$ have a minimizer in a common compact set $K$, 
%then, each sequence $u^k$, where for each $k,$ $u^k$ is a minimizer of $F^k,$ has a cluster point and each such cluster point 
%is a minimizer of $F^\infty.$
\end{theorem}

For a proof and further information see e.g. \cite{braides2002gamma}.
Our first goal is to show \eqref{eq:gamma_conv_1} for the semi-discrete Potts functionals $Q_\gamma^k.$
To this end we need some preparatory lemmas on piecewise constant functions.}

\begin{lemma} \label{lem:CompactnessOfMCj}
The set of piecewise constant functions which have at most $j$ jumps and which are bounded by the constant $C$ (w.r.t. the $\sup$-norm)
\begin{align}\label{eq:compactness}
		M_{C,j} = \left\{ g = \sum\nolimits_{l=1}^{j+1} a_l \, \indfunc_{(x_{l-1}, x_{l})} :  0 = x_0  \leq \ldots \leq x_{j+1} = 1, |a_l| \leq C \right\}
\end{align}
is a compact subset of $L^1[0,1]$. 	
\end{lemma}

\begin{proof}
We have that $M_{C,j}$ is the image of the compact set   
$$
   A= \left\{ (x,a) \in \R^{j+2} \times \R^{j+1} \; : \;  0 = x_0  \leq \ldots \leq x_{j+1} = 1, |a_l| \leq C \right\} 
$$
{\noop under the mapping $e(x,a) = \sum_{l=1}^{j+1} a_l \, \indfunc_{(x_{l-1}, x_{l})}$ which is a continuous mapping into $L^1[0,1]$ for the following reason. 
We fix $(x,a) \in A$ and consider a small perturbation $(x+\varepsilon_1,a+\varepsilon_2)$ in $A.$ 
(In the following we always consider representatives of the $L^1$ elements $e(x,a)$ which are constant between the $x_l$.)
If $|e(x,a)(t)- e(x+\varepsilon_1,a+\varepsilon_2)(t)| > \|\varepsilon_2\|_{\infty},$ then $t$ must be near a jump, i.e.,
there is an index $i$ such that $t$ is contained in the interval $I_i = [\min(x_i,x_i+\varepsilon_{1,i}), \max(x_i,x_i+\varepsilon_{1,i})].$
The Lebesgue measure of the union of these intervals $I_i$ equals $\|\varepsilon_1\|_1$ and since $e(x,a),e(x+\varepsilon_1,a+\varepsilon_2) \in A$
their absolute value does not exceed $C.$ Therefore, we have
\begin{align*}
   \|e(x &+\varepsilon_1,a+\varepsilon_2)-e(x,a)\|_1 
   = \|e(x+\varepsilon_1,a+\varepsilon_2)-e(x,a)|_{\cup I_i}\|_1 \\
   &+ \|e(x+\varepsilon_1,a+\varepsilon_2)-e(x,a)|_{[0,1]\setminus \cup I_i}\|_1  
   \leq 2C \|\varepsilon_1\|_1 + \|\varepsilon_2\|_\infty
\end{align*}
which implies that $e$ is continuous.
(In principle, we showed that the mappings $(x_i,x_{i+1}) \mapsto \indfunc_{[x_i,x_{i+1}]} $ 
are continuous and use the continuity of the vector space operations.)
Hence, $M_{C,j}$ is compact.}
\end{proof}

\begin{lemma} \label{lem:number_of_jumps}
Let $u$ be a piecewise constant function with minimal 
jump height $h$ and minimal interval width $l.$
Then, for any piecewise constant function $w,$ we have the implication	
\begin{align}\label{eq:number_of_jumps}
		J(w) < J(u) \qquad \Longrightarrow \qquad \| w - u\|_1 \geq %\tfrac{1}{2}
		 h \cdot l.
\end{align}
\end{lemma}

\begin{proof}
   {\noop We first show that it is sufficient to prove \eqref{eq:number_of_jumps} for functions $w$
	 whose jump set is included in the jump set of $u.$
	 To this end, we consider} 
	 a general $w$ with less jumps than $u$
	 and shift its jump locations to those of $u$
	 such that the resulting $w'$ fulfills $J(w) \geq J(w')$
	 {\noop and $ \| w - u \|_1 \geq \| w' - u \|_1.$
	 In the following we construct such a function $w'.$}
	 Let $X = \{ x_1,..., x_{n} \}$ be the jump set of $u = \sum_{i} a_i \indfunc_{(x_i, x_{i+1})}$
	 and let  $T = \{ t_1,..., t_r\}$ be the jump set of $w.$
	 Denote $S = T \setminus X$ the jump locations of $w$ which
	 are not in the jump locations of $u.$
	 We reduce $S$ to $\emptyset$ by the following recursive procedure.
	 If $S = \emptyset$ we are done. 
	 If $S \neq \emptyset,$  we pick a jump point $t_i \in S.$  
	 Then we consider the difference between $u$ and 
	 $w= \sum_j b_j \indfunc_{(t_j, t_{j+1})}$ to the left of
	 $t_i,$  i.e., $|u(t_i) - b_{i-1}|$ 
	 and the difference to the right $|u(t_i) - b_i|.$
	 If  $|u(t_i) - b_{i-1}| \geq |u(t_i) - b_i|$ 
	 then we shift $t_i$ to the left until we meet the next jump point $p \in X \cup T.$
	 {\noop(If there is no such point, we shift $t_i$ to $0$ and let $p=0.$)}	 
	 If the point $p$ is in $X \setminus T,$ say $p$ equals $x_l,$ 
	 then $w'$ reads as 
	 \begin{align*}
	 	w' = \sum\nolimits_{j \neq {i-1, i}} b_j  \indfunc_{(t_j, t_{j+1})} + 
		b_{i-1} \indfunc_{(t_{i-1}, x_{l})} + b_i \indfunc_{(x_l, t_{i+1})}.
	 \end{align*}
	 We have $J(w') = J(w),$  $\| w - u\|_1 \geq \| w' - u\|_1,$
	 and $w'$ has one more  jump in the jump set of $u$ than $w.$ 
	 If $p$ is in $T$ or $p=0,$ the procedure reduces the number of jumps
	 of $w$ by $1$ and still $\| w - u\|_1 \geq \| w' - u\|_1.$
	 In any case, the number of jumps which are not in $X$
	  is reduced by one.
	  In the opposite case,  $|u(t_i) - b_{i-1}| < |u(t_i) - b_i|,$
	  we shift to the right and  proceed analogously 
	  {\noop (where we shift to $1$ and let $p=1$ 
	  whenever there is no jump of $u$ or $w$ being further to the right).}
	 Iterating this algorithm, we obtain a $w'$ whose jump set is contained in that of $u.$
	 By construction, this $w'$ still has less jumps than $u,$ i.e., $J(w') < J(u),$ and 
	  $\| w - u\|_1 \geq \| w' - u\|_1.$ 
	 
	 In consequence, we find an interval $(t_i, t_{i+1})$
	 where $w'$ does not jump
 but $u$ does so at $t_i < p < t_{i+1}.$
 Then 
	 \begin{align}
	 	\| w' - u\|_1 \geq | u(p^+) - u(p^-) | \cdot \min(p-t_i, t_{i+1} - p) 
		   \geq %\tfrac{1}{2} 
		   h l,
	 \end{align}
	which shows \eqref{eq:number_of_jumps}.	 
\end{proof}

{\noop We now show the first condition for $\Gamma$-convergence \eqref{eq:gamma_conv_1} for the semi-discrete Potts functionals $(Q_\gamma^k).$ }
\begin{lemma}\label{lem:lower_semicontiuous}
	Let $u \in L^1[0,1]$ and let $(u^k)$ be a sequence in $L^1[0,1]$ such that $\|u^k - u\|_1 \to 0$ for $k \to \infty.$
	Then 
		$P_\gamma(u)$ $\leq \liminf\nolimits_{k\to \infty} Q_\gamma^k(u^k).$ 
\end{lemma}
\begin{proof}
We may restrict ourselves to sequences $(u^k)$ in $\PC.$ We first consider $u \in L^1$ which is not essentially bounded, i.e., $u \not\in L^\infty.$
We show that, for each sequence of piecewise constant functions $(u^k)$ converging to $u,$ 
the number of jumps tends to infinity, which then yields Lemma \ref{lem:lower_semicontiuous} for essentially unbounded $u$. 
To this end, we construct a sequence of disjoint intervals $(I_i)$ 
and a sequence of positive numbers $(\varepsilon_i)$ for which we show the following:
{\noop for an approximating piecewise constant function $\tilde{u},
$ $\|\tilde{u}-u\|_1 < \varepsilon_i$  implies that $|\tilde{u}|$ takes values in each of the disjoint sets $I_1,\ldots,I_i.$}
Since $u$ is essentially unbounded, the Lebesgue measure $\lambda(\{|u| \geq n\})$ is positive for any positive integer $n.$
We choose $n_1= 1$ and 
{\noop let $\varepsilon_1 = \lambda(\{|u| \geq 2 n_1\})/2.$} 
Using the Markov inequality, we find $m_1 > 2n_1$ such that
$\lambda(\{|u| > m_1\}) \leq \varepsilon_1.$ This implies $\lambda(\{ 2 n_1 \leq |u| \leq m_1\})$ $\geq \varepsilon_1.$
If $|\tilde{u}|$ does not take a value 
in the interval $I_1 = [n_1,2m_1]$ $\cup [-2m_1,-n_1],$
then we can estimate its approximation error by
$$
\|\tilde{u}-u\|_1 \geq \|(\tilde{u}-u)|_{\{2n_1 \leq |u| \leq m_1 \}}\|_1 \geq n_1 \cdot \lambda(\{2n_1 \leq |u| \leq m_1 \}) 
\geq \varepsilon_1. 
$$
We choose $n_2 = 2m_1 +1$ and let $\varepsilon_2 = \lambda(\{|u|> 2 n_2\})/2.$ We proceed likewise to obtain intervals $I_i$ and
positive numbers $\varepsilon_i$ with the claimed properties. Then, if $(u^k)$ converges to $u,$ there is some index $k_i$ such
that $\|u^{l} - u\|_1 < \varepsilon_i$ for all $l > k_i.$ Each such piecewise constant function $u^l$ must take values in 
each of the intervals $I_1, \ldots, I_i.$ Thus the number of jumps of the sequence $(u^k)$ tends to infinity.

Next, we consider essentially bounded $u.$ We show that, if $(u^k)$ tends to $u$ and the number of jumps of the $u^k$ is (uniformly) bounded,
then $u$ is piecewise constant (i.e., it has a piecewise constant representative). 
{\noop This implies that for non piecewise constant $u,$ 
the number of jumps of a sequence $(u^k)$ converging towards $u$ must go to infinity for the following reason.  
If there was a subsequence of $u^k$ with a uniformly bounded number of jumps, its limit would be piecewise constant.
Furthermore, the limit would equal $u$ and so $u$ would be piecewise constant which would be a contradiction.
Hence, in order to show Lemma \ref{lem:lower_semicontiuous} for non piecewise constant essentially bounded functions 
we have to show that the uniform boundedness of the number of jumps of a converging sequence $(u^k)$ implies the piecewise constance of its limit.
}

In a first step, we construct a sequence $(\tilde{u}^k)$
which converges to $u$ and which is uniformly bounded (w.r.t. the $\sup$-norm) as follows.
We define the piecewise constant function $\tilde{u}^k$ by $u^k $ except for the intervals on which $|u^k|> 2 \|u\|_\infty.$ 
Here we let $\tilde{u}^k = 0.$ Then $(\tilde{u}^k)$ is uniformly bounded by $2 \|u\|_\infty$ and 
$\|\tilde{u}^k-u\|_1  \leq \|u^k-u\|_1.$ This entails that $(\tilde{u}^k)$ converges to $u.$
So we may assume that $(u^k)$ tends to $u$ in $L^1,$ that the piecewise constant functions $(u^k)$ are uniformly bounded (w.r.t. the $\sup$-norm)
and that the number of jumps of the $(u^k)$ is bounded as well. 
{\noop Then the piecewise constance of $u$ is a consequence of the compactness of the sets $M_{C,j}$ defined by \eqref{eq:compactness}  
in Lemma \ref{lem:CompactnessOfMCj}.}

So far, we have shown Lemma \ref{lem:lower_semicontiuous} for non piecewise constant $u.$ 
Now we consider a piecewise constant function $u$ and show the assertion in that case.
As a consequence of \eqref{eq:number_of_jumps} in Lemma \ref{lem:number_of_jumps}
we find for each sequence of piecewise constant functions $(u^k)$ converging to $u$ in $L^1$
 an index $k_0$ such that, for all $k \geq k_0,$
	\begin{align}\label{eq:jump_estimate}
		J(u^k) \geq J(u).
	\end{align}
	Equipped with this estimate we show
	the assertion of the lemma for piecewise constant functions $u$.
	Since $Q^k_\gamma(v) = \infty$ for  $v \notin \PCk$
	we may assume without loss of generality that $u^k$ is in $\PCk.$	
	Then $P_\gamma(u^k) = Q_\gamma^k(u^k)$
	and thus, using \eqref{eq:jump_estimate}, 
	\begin{align} \notag
		P_\gamma(u)  = \gamma J(u) + \|u-f\|_1 &\leq \liminf_k \gamma J(u^k) + \lim_k \|u^k-f\|_1 \\ 
		&= \liminf_k P_\gamma(u^k) = \liminf_k Q^k_\gamma(u^k)
		\label{eq:lsc}
	\end{align}
	which completes the proof.
\end{proof}

Next we show the second condition for $\Gamma$-convergence \eqref{eq:gamma_conv_2}.
\begin{lemma}\label{lem:recovery_sequence}
	For each $u$ in $L^1[0,1]$ there is a recovery sequence $(u^k)$ which converges to $u$ in $L^1$ 
	and which fulfills
		$P_\gamma(u)$ $\geq \limsup\nolimits_{k\to \infty} Q_\gamma^k(u^k).$ 
\end{lemma}
\begin{proof}
    We may restrict  $u$ to $\PC,$ since otherwise $P_\gamma(u) = \infty$.
		For $k \in \N$ we define 
	 $\epsilon_k$ as the largest gap in $X^k,$
	 i.e.,
	  $\epsilon_k = \sup_l |x_l - x_{l+1}|,$
	 where the $x_l$ are a monotone ordering of the points in $X^k$ (including $0$ and $1$). 
	Since the union of discretization sets $X^k$ is dense in $[0,1],$
 the sequence $(\epsilon_k)$ converges to $0$ as $k \to \infty.$  
	  
	  Let us fix $k$ for the moment.
	  For every jump location $t_j$ of $u$ we find 
	  a closest discretization point $x_j$ such that $|x_j - t_j| < \epsilon_k;$ 
	 Then, for sufficiently large $k$, replacing each interval $(t_i, t_{i+1})$
	 in $u = \sum_i a_i \indfunc_{(t_i, t_{i+1})}$
	 by the interval $(x_i, x_{i+1}),$ we obtain a piecewise constant function
	 $u^k$ such that 
	 \begin{align*}
	 	\| u - u^{k} \|_1 \leq \epsilon_k \cdot H \cdot J(u).
	 \end{align*}
	 Here, $H$ is the maximal jump height of $u.$
	 For sufficiently large $k,$
	 $u$ and $u^{k}$ have the same number of jumps, since $(\epsilon_k)$ converges to $0$ 
	 and so each jump point of $u$ is assigned exactly one point in $X^k.$ 
	 Therefore, we have
	 \begin{align*}
	 	P_\gamma(u) = \gamma J(u) + \| u - f\|_1
		&\geq \gamma J(u^{k}) +  \| f - u^{k}\|_1 - \| u^{k} - u \|_1 \\
		&\geq P_\gamma(u^{k}) - \epsilon_k \cdot H \cdot J(u).
	 \end{align*}
	 Rewriting this as
	$ P_\gamma(u) + \epsilon_k \cdot H \cdot J(u)$ $\geq P_\gamma(u^{k})$ $= Q^k_\gamma(u^{k})$
	 and passing to the limit $k \to \infty$
	 yields the assertion.
\end{proof}

{\noop The following lemma is an auxiliary statement needed for the proof of the next theorem which is our main result on the semi-discrete Potts functionals $(Q^k_\gamma).$
It is also needed in the proof of \autoref{thm:gamma_convergence_of_discrete_potts}.}
\begin{lemma}\label{lem:CombMarkAbsCont}
{\noop   Let $f \in L^1[0,1]$ and let $f^k,$ $k=1,2,\ldots$ be the functions obtained from $f$ via the integral sampling \eqref{eq:sampling_integral}.
	 For $\varepsilon>0$ there is a constant $C>0$ such that, for any $g \in \{f,f^1,f^2,\ldots\},$  
\begin{align} \label{eq:boundOnIntervallengthImpliesSmallIntegralA} 
		 |\mu_I(g)| \geq C     \qquad \implies \qquad  \int\nolimits_I |g| \leq \varepsilon.
\end{align}
	 Here $\mu_I(g)$ denotes a median of $g$ on the interval $I.$ 
	 The interval $I$ is arbitrary for $g=f,$ and for $g=f^k,$ we assume that the endpoints of $I$ are in the discretization set $X^k.$ }
\end{lemma}

\begin{proof}
{ \noop
For $h \in L^1,$ we have the following connection between the absolute value of a median $\mu_I(h)$ on some interval $I$ and the length of the interval $|I|$
\begin{align} \label{eq:boundOnIntervallengthA}
		 |\mu_I(h)| \geq c     \qquad \implies \qquad  |I| \leq \tfrac{2\|h\|_1}{c}       \qquad \text{for any } c>0.
\end{align} 
Indeed,  $|\mu_I(h)| \geq c$ implies that $\lambda(\{|h| \geq c \}) \geq |I|/2,$ and the Markov inequality
yields $\lambda(\{|h| \geq c \}) \leq \|h\|_1/c.$ We apply \eqref{eq:boundOnIntervallengthA} to $f^k$
and note that since $f^k = \sum_l S_k f(l) \cdot \indfunc_{I_l}$ we have $\|f^k\|_1 \leq \|f\|_1.$
We get that, for any interval $I$, 
 \begin{align} \label{eq:boundOnIntervallengthDiscrA}
		 |\mu_I(f^k)| \geq c     \qquad \implies \qquad  |I| \leq \tfrac{2\|f\|_1}{c}       \qquad \text{for any } c>0.
\end{align}
Furthermore, if the interval length $|I|$ is small, the integral 
of $|f|$ on $I$ is small, i.e., for $\varepsilon>0$ there is $\delta>0$ such that 
\begin{align} \label{eq:boundOnIntegralA}
		 \textstyle |I| \leq \delta     \qquad \implies \qquad  \int\nolimits_I |f| \leq \varepsilon.
\end{align} 
This implication can be seen by considering the finite measure $|f| d\lambda$ with density $|f|$ with respect to the
Lebesgue measure $\lambda.$ The corresponding distribution function is absolutely continuous and thus uniformly continuous
which is precisely the statement of \eqref{eq:boundOnIntegralA}.
Since $\int\nolimits_I |f^k|$ $\leq \int\nolimits_I |f|,$ for all intervals $I$ whose endpoints are in the discretization set $X^k,$
we get that \eqref{eq:boundOnIntegralA} implies
\begin{align} \label{eq:boundOnIntegralDiscrA}
		 \textstyle |I| \leq \delta     \qquad \implies \qquad  \int\nolimits_I |f^k| \leq \varepsilon
\end{align} 
for all such intervals $I.$ 
Combining \eqref{eq:boundOnIntervallengthDiscrA} and \eqref{eq:boundOnIntegralDiscrA} yields the assertion of the lemma for $g=f^k,$
and combining \eqref{eq:boundOnIntervallengthA} for $h=f$ and \eqref{eq:boundOnIntegralA} yields the assertion for $g=f.$}
\end{proof}

We formulate our main result on the semi-discrete Potts functionals $(Q^k_\gamma).$

\begin{theorem}\label{thm:gamma_convergence_semi}
Let $f$ be an integrable function on $[0,1]$ and let $P_\gamma$  be the corresponding
continuous $L^1$-Potts functional. Then the semi-discrete Potts functionals $(Q_\gamma^k)$ with respect to a nested sequence of 
discretization  sets $(X^k)$ $\Gamma$-converge to $P_\gamma.$ 
Furthermore, each functional $Q_\gamma^k$ has a minimizer. 
Each sequence $(u^k),$ where $u^k$ is a minimizer of $Q_\gamma^k,$ has at least one accumulation point. 
Each such accumulation point $u^\ast$ is a minimizer of $P_\gamma$, i.e.,	
	$	P_\gamma(u^\ast) = \inf\nolimits_{v \in  L^1} P_\gamma(v).$ 
\end{theorem}

{\noop
As an immediate consequence we have the following statement.
\begin{theorem}
The continuous $L^1$-Potts functional $P_\gamma$ has a minimizer.
\end{theorem}
}

{\em Proof of \autoref{thm:gamma_convergence_semi}.}
Our first aim is to show that all the minimizers of the functionals $(Q_\gamma^k)$ are contained in a compact subset $M$ of $L^1[0,1]$
which is independent of $k$. We let $M=M_{C,j}$ be the compact set from \eqref{eq:compactness}, where
we define the maximal number of jumps $j$ by the smallest integer larger than $\|f\|_1/\gamma$.
The constant $C$ is defined in the course of the proof. 
To show that all the minimizers are contained in $M,$ 
we first prove that 
\begin{align} \label{eq:strong_equi_mildly}
    \inf_{u \in M} Q_\gamma^k(u) = \inf_{u \in \PC} Q_\gamma^k(u) < Q_\gamma^k(v)  \text{ for all } v \notin M.
\end{align}
To this end,
we consider an arbitrary piecewise constant function $u \notin M$ and show that there is a piecewise constant function $\tilde{u}$ contained in $M$ with lower Potts value.
 Because of the minimizing property \eqref{eq:minimizerPropertyMedian} of the median,  
 we may assume that $u = \sum_{i} \alpha_i \indfunc_{(x_{i}, x_{i+1})}$ is given by a median $\mu_I$ 
 on each interval $I = (x_{i}, x_{i+1}).$
{\noop If the number of jumps of $u$ obeys $J(u) > j > \|f\|_1/\gamma$ then $\tilde{u} = 0$ has a smaller Potts functional value since 
\begin{align} \label{eq:large_jumps_number}
P_\gamma(u) \geq \gamma J(u) > \|f\|_1 \geq P_\gamma(0) \geq \inf\nolimits_{v \in\PC}  P_\gamma(v).
\end{align}
Therefore, we may assume that $u$ has at most $j$ jumps.
Now we define the constant $C$ in $M_{C,j}$ as the resulting number $C$ in \eqref{eq:boundOnIntervallengthImpliesSmallIntegralA} for $g=f$
and the choice $\varepsilon = \gamma/6.$ This means
\begin{align} \label{eq:boundOnIntervallengthImpliesSmallIntegral}
		 |\mu_I(f)| \geq C     \qquad \implies \qquad  \int\nolimits_I |f| \leq \gamma/6 \qquad \text{for any interval } I.
\end{align}
If $C< 3 \|f\|_1$ we enlarge it such that $C \geq 3 \|f\|_1.$}
We observe that there is at least one interval $I = (x_{i}, x_{i+1})$
where $|u| \leq C.$ If this was not the case, we would have $\lambda(\{|f|>C \}) \geq 1/2.$ 
This contradicts the assumption  $C \geq 3 \|f\|_1$
since $\lambda(\{|f|>C \}) \leq \|f\|_1/C \leq 1/3.$
We start with an interval $I$ where $|u| \leq C$ and define $\tilde{u} = u$ on $I.$ 
Then we consider a (left or right) neighbor interval $J.$ 
{\noop (Such a neighboring interval always exists since we assumed that $u$ is not in $M_{C,j}.$) }
If $|u|\leq C$ on $J,$
we define $\tilde{u}=u$ on $J$ and proceed with a neighbor of $I$ or $J.$
If $|u| > C$ on $J,$ we define $\tilde{u}|_J = u|_I,$ i.e., we remove a jump.
We have $J(\tilde{u}) = J(u)-1.$ 
Furthermore, $\|\tilde{u} |_J\|_1$ $\leq \|u|_J\|_1,$ since $|u|_I| \leq C < |u|_J|.$  
Also, $\|u|_J\|_1$ $\leq 2 \|f|_J\|_1$ because the value of $u$ on $J$ is a median of $f$ on $J$
and thus $ \|u|_J\|_1 - \|f|_J\|_1$ $\leq \|(f-u)|_J\|_1$ $\leq \|f|_J\|_1.$ 
By \eqref{eq:boundOnIntervallengthImpliesSmallIntegral},
$\|(\tilde{u}-f )|_J\|_1$ $\leq \|\tilde{u} |_J\|_1 $ $+  \|f|_J\|_1 $
$\leq 3 \|f|_J\|_1$ $\leq \gamma/2.$ In consequence,
\begin{align} \label{eq:betterPottsValue}
P_\gamma(\tilde{u}) = \gamma J(\tilde{u}) + \|f-\tilde{u}\|_1 \leq \gamma J(u) - \gamma + \|f-u\|_1 + \gamma/2  < P_\gamma(u).
\end{align}   
Proceeding likewise for the other intervals, we obtain a piecewise constant function $\tilde{u}$ which is smaller than $C$ in absolute
value and thus is contained in $M_{C,j}.$ Furthermore, $P_\gamma(\tilde{u}) < P_\gamma({u}).$
Since $P_\gamma = Q^k_\gamma$ for piecewise constant functions which only jump in the discretization set $X^k$
and since the jump set of $\tilde{u}$ is contained in the jump set of $u,$ \eqref{eq:betterPottsValue}
remains true when we replace $P_\gamma$ by $Q^k_\gamma.$ This shows \eqref{eq:strong_equi_mildly}.

We are now able to prove the assertions of the theorem. Lemma \ref{lem:lower_semicontiuous} and Lemma \ref{lem:recovery_sequence} imply the $\Gamma$-convergence of $(Q_\gamma^k)$  to $P_\gamma$. 
Since \eqref{eq:strong_equi_mildly} holds, we deduce from the lower semicontinuity of $Q_\gamma^k$ 
(which is a consequence of \eqref{eq:lsc})
and the compactness of $M$ that each $Q_\gamma^k$ has a minimizer which then is in $M$. 
Furthermore,  
the compactness of $M$ implies the existence of a cluster point for any  sequence of minimizers $(u^k)$.
Then the last assertion of the theorem is a consequence of the fundamental theorem of $\Gamma$-convergence;   
cf. \autoref{thm:fundGammaCon} or \cite[Theorem 1.21, p. 29]{braides2002gamma}.
\quad \endproof

\subsection{$\Gamma$ convergence of discrete Potts functionals}\label{ssec:gamma_convergence}
We use the results on the semi-discrete Potts functional to obtain 
$\Gamma$-convergence with respect to the $L^1$-norm for the discrete Potts functionals defined by \eqref{eq:potts_fully_discrete}. 
\begin{theorem}\label{thm:gamma_convergence_of_discrete_potts}
Let $f$ be an integrable function and let $P_\gamma$  be the corresponding continuous $L^1$-Potts functional.
Then the discrete $L^1$-Potts functionals $(P_\gamma^k)$ 
$\Gamma$-converge to $P_\gamma.$
Each sequence $(u^k),$ where $u^k$ is a minimizer of $P_\gamma^k,$ has at least one accumulation point. 
Each such accumulation point $u^\ast$ is a minimizer of $P_\gamma$, i.e.,
	$	P_\gamma(u^\ast) = \inf\nolimits_{v \in  L^1[0,1]} P_\gamma(v). $ 
\end{theorem}

{\em Proof.}
  We start showing the statement on $\Gamma$-convergence. We first establish a quantitative relation between
	$P_\gamma^k$ and $Q_\gamma^k.$ We may estimate, {\noop for any $u \in \PCk,$} 
  \begin{align}\label{eq:relation_P_Q}
	P_\gamma^k(u) 
	&= \gamma \cdot J(u) + \| u- f \|_1 \notag\\
	&\leq \gamma \cdot J(u) + \| u-\sum\nolimits_j S_k f(j) \cdot \indfunc_{I_j} \|_1  +  
	    \| f-\sum\nolimits_j S_k f(j) \cdot \indfunc_{I_j}  \|_1 \notag\\
	&= Q_\gamma^k(u)  +  \| f-\sum\nolimits_j S_k f(j) \cdot \indfunc_{I_j}  \|_1.
	\end{align}
	If $S_k$ is given by the integral sampling \eqref{eq:sampling_integral} we have the following  inequality 
	\begin{align}\label{eq:difference_estimate_by_modulus_of_cont}
	\| f-\sum\nolimits_j S_k f(j) \cdot \indfunc_{I_j}  \|_1 \leq C \cdot \omega(f,\sup\nolimits_j |I_j|)
	\end{align}
	where $C$ is a positive constant and $\omega$ is the $L^1$ modulus of continuity given by
	$	\omega(f, t)$ $= \sup_{|h| \leq t} \| f(\argdot + h) - f\|_1,$ cf. \cite{devore1993constructive}.
		Since the translation is continuous in $L^1$ and 
	the union of the nested sampling sets $X^k$ is dense,
	it follows by \eqref{eq:difference_estimate_by_modulus_of_cont} that  
	$		\| f-\sum_j S_k f(j) \cdot \indfunc_{I_j}  \|_1 \to 0,$ as $k \to \infty.$	
	Now exchanging the roles of $P_\gamma^k$ and $Q_\gamma^k$ in \eqref{eq:relation_P_Q}
	we conclude that
	\begin{align}\label{eq:diffPandQ}
		| P_\gamma^k(u) - Q_\gamma^k(u) | \leq 
		\| f-\sum\nolimits_j S_k f(j) \cdot \indfunc_{I_j}  \|_1 \to 0, \qquad\text{as $k \to \infty.$}
	\end{align}
	We emphasize that the above convergence is uniform in $u.$ 
	If we consider point sampling \eqref{eq:sampling_point}
	and a continuous function $f$, \eqref{eq:diffPandQ} remains true.		
		This is because
		\begin{align}
		\| f - \sum\nolimits_j S_k f(j) \indfunc_{I_j}\|_1 \leq \sup_j \sup_{x,y \in I_j} |f(x) - f(y)|
		\end{align}
		tends to zero as $k$ increases since $f$ is uniformly continuous on $[0,1]$ 
		and the maximum interval length $|I_j|$ goes to zero since the union of the nested $X^k$ is dense.
		
		In order to show $\Gamma$-convergence, we show \eqref{eq:gamma_conv_1} and \eqref{eq:gamma_conv_2} 
		for the discrete Potts functionals.
		Let us consider a sequence $(u^k)$ which converges to $u$ in $L^1.$ Then 
		{\noop $Q_\gamma^k(u^k)$ $\leq P_\gamma^k(u^k)$ $+|P_\gamma^k(u^k) - Q_\gamma^k(u^k)|$.} 
		This, together with Lemma \ref{lem:lower_semicontiuous}, in turn implies that 
		\begin{align}\label{eq:GammaForFullyDiscrete}		
		P_\gamma(u) &\leq \liminf\nolimits_{k\to \infty} Q_\gamma^k(u^k)  \\
		&\leq \liminf\nolimits_{k\to \infty} P_\gamma^k(u^k)
		+ \limsup\nolimits_{k\to \infty} \sup\nolimits_{n \geq k} |P_\gamma^n(u^n) - Q_\gamma^n(u^n)| . \notag
	  \end{align}
   Since the convergence in \eqref{eq:diffPandQ} is uniform in $u$ the second summand on the right hand side is $0$
   which shows \eqref{eq:gamma_conv_1}. In order to obtain a recovery sequence for the discrete Potts functionals $(P_\gamma^k),$
   we consider a recovery sequence $(u^k)$ for the semidiscrete Potts functionals $(Q_\gamma^k),$ which exists according to
   Lemma \ref{lem:recovery_sequence}. Then, also by Lemma \ref{lem:recovery_sequence},
   		\begin{align*}
		P_\gamma(u) &\geq \limsup\nolimits_{k\to \infty} Q_\gamma^k(u^k) \\ & \geq \limsup\nolimits_{k\to \infty} P_\gamma^k(u^k)
		- \limsup\nolimits_{k\to \infty} \sup\nolimits_{n \geq k} |P_\gamma^n(u^n) - Q_\gamma^n(u^n)| .
		\end{align*}
    As above, the second summand on the right hand side is $0$ and so $(u^k)$ is a recovery sequence for $(P_\gamma^k),$ too.
    This shows \eqref{eq:gamma_conv_2} and we have shown $\Gamma$-convergence. 
   
    Our next goal is to locate all minimizers of $(P_\gamma^k)$ in a common compact set $M$. To this end, we show 
    the analogous statement to \eqref{eq:strong_equi_mildly} replacing the semidiscrete Potts functional by
    their discrete counterparts, i.e.,     
    \begin{align} \label{eq:strong_equi_mildly_discr}
    \inf_{u \in M} P_\gamma^k(u) = \inf_{u \in \PC} P_\gamma^k(u) < P_\gamma^k(v)  \text{ for all } v \notin M.
    \end{align}
    The proof is based on the corresponding proof of \eqref{eq:strong_equi_mildly} which can be found in
    the proof of \autoref{thm:gamma_convergence_semi}.
    
    {\noop We first consider the case where $S_k$ is the integral sampling operator \eqref{eq:sampling_integral}.
    We choose the compact set $M=M_{C,j}$ with  $C$ and $j$ as in the proof of \autoref{thm:gamma_convergence_semi}.
    In particular, this yields the implication  \eqref{eq:boundOnIntervallengthImpliesSmallIntegralA} 
    for $g=f^k$ and $\varepsilon = \gamma/6.$ Now we can consider a piecewise 
    constant function $u \notin M$ whose jump set is contained in $X^k$ and 
    proceed as in the proof of \autoref{thm:gamma_convergence_semi} to obtain a piecewise constant function $\tilde{u}$    
    whose jump set is contained in $X^k,$ which has smaller $P_\gamma^k$ value, 
    and which is contained in $M_{C,j}.$ This in turn implies \eqref{eq:strong_equi_mildly_discr}.}

    {\noop If $S_k$ is the point sampling operator \eqref{eq:sampling_point} and if $f$ is continuous, we have  
    $\|f^k\|_1$ $\leq \|f^k\|_\infty$ $\leq \|f\|_\infty.$ Then \eqref{eq:boundOnIntervallengthA} applied to $f^k$ yields
    the implication \eqref{eq:boundOnIntervallengthDiscrA} with $\|f\|_1$ replaced by $\|f\|_\infty.$
    In this case \eqref{eq:boundOnIntegralDiscrA} is trivial since $\int\nolimits_I |f^k|$ $\leq \|f\|_\infty \cdot |I|.$ 
    As in the case of integral sampling we use these two implications to establish \eqref{eq:boundOnIntervallengthImpliesSmallIntegralA} for $g=f^k$
    which allows us to proceed as in the proof of \autoref{thm:gamma_convergence_semi} to obtain \eqref{eq:strong_equi_mildly_discr}.}
    
    {\noop Each $P_\gamma^k$ has a minimizer since it is equal to a semi-discrete Potts functional for data $\sum\nolimits_j S_k f(j) \indfunc_{I_j}$
    which has a minimizer by \autoref{thm:gamma_convergence_semi}.}
    Equipped with \eqref{eq:strong_equi_mildly_discr} and knowing that each $P_\gamma^k$ has a minimizer 
    we conclude that all minimizers are contained in $M = M_{C,j}.$
    The compactness of $M$ implies the existence of a cluster point for any  sequence of minimizers $(u^k)$.
    Then the last assertion of the theorem is a consequence of the fundamental theorem of $\Gamma$-convergence;   
    cf. \autoref{thm:fundGammaCon} or \cite[Theorem 1.21, p. 29]{braides2002gamma}.  \quad
\endproof

\section{A fast algorithm for the exact minimization of $L^1$-Potts functionals}\label{sec:alogrithm}

In this section we propose a fast algorithm to minimize weighted Potts functionals of the form 
\begin{align}\label{eq:potts_discrete_non_equidistant} 
\| u - f\|_{\ell^1_w} + \gamma J(u) = \sum\nolimits_{i} w_i |u_i - f_i| + \gamma \cdot \numel{\{i : u_i \neq u_{i+1} \}},
\end{align}
where the $w_i$ are positive weights.
Weighted $\ell^1$-data terms appear naturally when data are not sampled equidistantly.
Clearly, if all weights $w_i$ are identical
this reduces to the non-weighted $L^1$-Potts functional \eqref{eq:potts_L1_discFir}.

In \autoref{sec:PottsCore} we recall the Potts core algorithm as proposed in \cite{friedrich2008complexity}.
In \autoref{sec:Histogram} we introduce a suitable dynamic data structure.
In \autoref{sec:OurAlgorithm} we explain our fast algorithm and obtain results on its complexity in \autoref{ssec:complexity}.
In \autoref{sec:TheAlgorithmPlainExperiments} we experimentally compare $L^1$-Potts, $L^2$-Potts, and  $L^1$-TV minimization under various types of noise.

\subsection{The Potts core algorithm}\label{sec:PottsCore}

{ \noop
The basic idea is that a minimizer of the Potts functional
for data $(f_1,..., f_r)$ can be computed in polynomial time provided that minimizers of the  reduced data $f_{[1,1]},$ $f_{[1,2]},$ $...,$ $f_{[1,r-1]}$ are known
\cite{mumford1985boundary, chambolle1995image}.
Here, we use the notation $f_{[\ell, r]}$ for the partial data $(f_\ell, f_{\ell+1}, ..., f_{r}).$ 
The procedure works as follows.
Let us denote  by $u^1,$  $u^2,$ ..., $u^{r-1}$ arbitrary (not necessarily unique) minimizers
of the Potts functional for the partial data $f_{[1,1]},$ $f_{[1,2]},$ $...,$ $f_{[1,r-1]},$ respectively.
In order to compute a minimizer for data $f_{[1,r]},$ 
we first create a set of $r$ candidates $v^1,$ ..., $v^{r},$ each of length $r,$
which are given by
\begin{equation}\label{eq:potts_candidate}
	v^\ell = (u^{\ell-1} , \underbrace{\mu_{[\ell,r]},..., \mu_{[\ell,r]}}_{\text{Length } r - \ell +1 }). 
\end{equation}
Here, $u^0$ is the empty vector and
the symbol $\mu_{[\ell, r]}$ denotes an (arbitrary) weighted median for data $f_{[\ell, r]}.$ 
Recall that a weighted median is a minimizer of the functional 
$y \mapsto  \| (y, ..., y) - f_{[\ell, r]}\|_{\ell^1_w}$
and that the minimal functional value is given by the median deviation
\begin{align}\label{eq:med_dev}
	d_{[\ell, r]} = \| f_{[\ell, r]} - (\mu_{[\ell, r]} , ..., \mu_{[\ell, r]})  \|_{\ell^1_w}
	 = \sum\nolimits_{i=\ell}^{r} w_i | f_{i} - \mu_{[\ell, r]} |.
\end{align}
Among the candidates $v^\ell,$ there is a minimizer of the Potts functional, as the following lemma shows.
\begin{lemma}\label{lem:potts_minimizer_cand}
For the Potts functional associated with data $f_{[1,r]},$ we have that
\[
\min_{\ell=1,...,r} P_{\gamma}(v^\ell) = \min_{u \in \R^r} P_{\gamma}(u).
\]
In particular, the Potts functional value of a minimizer $u^*$ is given  by 
\begin{align}\label{eq:potts_value_candidate}
P_\gamma(u^*) = \min_{\ell=1,...,r} P_{\gamma}(u^{\ell-1}) +  \gamma + 	d_{[\ell,r]},
\end{align}
where  we set $P_\gamma(u^0) = -\gamma.$
\end{lemma}
\begin{proof}
Let $u^*$ be a minimizer of the Potts functional 
for data $f_{[1,r]}.$ 
If $u^*$ is constant then  $P_{\gamma}(u^*) = P_{\gamma}(v^1)$ because both $u^*$ and $v^1$ are constant vectors containing a median of data
 $f_{[1, r]};$ hence $v^1$ is a minimizer of the Potts functional. 
In particular,
we have $P_\gamma(u^*) = d_{[1, r]} = P_\gamma(u^0) + \gamma + d_{[1,r]}$
which shows \eqref{eq:potts_value_candidate} in the constant case.

If $u^*$ is not constant, let $\ell^* \in \{1,...,r-1\}$ be its rightmost jump location, i.e.,
$u^*_{\ell^*} \neq u^*_{\ell^*+1}$ and $u^*$ is constant on $\ell^*+1,...,r.$
Since $u^{\ell^*}$ minimizes the Potts functional associated with data $f_{[1,\ell^*]}$ and since $u^*$ has a jump at $\ell^*$ we have 
for the candidate $v^{\ell^*+1},$ which jumps at $\ell^*,$ that
\begin{align}\label{eq:potts_candidate_value_estimate}
	P_\gamma(v^{\ell^*+1}) &\leq
	 P_\gamma(u^{\ell^*}) + \gamma + d_{[\ell^*+1, r]}
	\leq P_\gamma(u^*_{[1,\ell^*]}) + \gamma + 
	d_{[\ell^*+1, r]}
	= P_\gamma(u^*).
\end{align}
Here $u^*_{[1,\ell^*]}$ denotes the vector of the first $\ell^*$ components of $u^*.$
Hence, $v^{\ell^*+1}$ is a minimizer of the Potts functional.
Further, since $P_\gamma(v^\ell) \leq P_\gamma(u^{\ell-1}) + \gamma + d_{[\ell, r]}$ for all $\ell= 1,...,r$
we have that
\begin{align*}
\min_{\ell=1,...,r} P_\gamma(v^\ell)
\leq\min_{\ell=1,...,r} P_\gamma(u^{\ell-1}) + \gamma + d_{[\ell, r]}
\leq
P_\gamma(u^{\ell^*}) + \gamma + d_{[\ell^*+1, r]}
\leq P_\gamma(u^*).
\end{align*}
Using $P_\gamma(u^*) \leq \min_{\ell=1,...,r} P_\gamma(v^\ell)$ we obtain \eqref{eq:potts_value_candidate} in the non-constant case. \quad
\end{proof}

Recall from Example \ref{ex:nonuniqueness} that minimizers of the $L^1$-Potts functional are not unique in general;
so it is possible that several candidates $v^{\ell}$ fulfill $P_\gamma(v^{\ell}) =  P_\gamma(u^*).$
Note that, for such a minimizer $v^{\ell}$ ,  the values $v^{\ell}_{\ell-1}$ and $v^{\ell}_{\ell}$ 
are different, i.e., $v^{\ell}$ in fact has a jump at $\ell -1$
%OLD: at the rightmost jump point of $v^{\ell}$ are different 
(despite the possible non-uniqueness of medians). 
This is a consequence of the proof of Lemma~\ref{lem:potts_minimizer_cand} above since, otherwise, we could replace the first ``$\leq$'' symbol in   
\eqref{eq:potts_candidate_value_estimate} by ``$<$'' which would contradict $u^\ast$ being a minimizer.

%ALT:
%We also note that the lefthand and the righthand value of the rightmost jump point of a minimizer cannot be equal 
%(although the median is not unique in general). 
%This is a consequence of the above lemma since we would have 
%a contradiction in \eqref{eq:potts_candidate_value_estimate}.

Formula \eqref{eq:potts_value_candidate} already yields a procedure
to minimize the Potts functional.
A speedup is given by a crucial observation of \cite{friedrich2008complexity}:
we can compute a minimizer from knowing the rightmost jump locations of (arbitrary) 
minimizers of certain data.
 
%ALT:  and their Potts functional values.

\begin{lemma}\label{lem:reconstruction_of_minimizers}
For $\ell= 1,\ldots,r,$ let $Z(\ell)$ be the rightmost jump location of an (arbitrary) minimizer 
$u^\ell$ of the Potts functional associated with data $f_{[1,\ell]}.$ ($Z(\ell) = 0$ if $u^\ell$ is constant.)
We consider the strictly decreasing finite sequence $r, Z(r),$ $Z(Z(r)),$ $Z(Z(Z(r))),...,0.$
We define $\bar{u}$ on $Z^{k+1}(r) + 1,..., Z^k(r)$ as a median $\mu_{[Z^{k+1}(r) + 1, Z^k(r)]}.$
Then $\bar{u}$ is a minimizer of the Potts functional for data $f_{[1,r]}.$
\end{lemma}
\begin{proof}
We use induction on $r.$ For $r=1,$ $Z(r)=0$ and $u=f_1=\mu_{[1,1]}$ is a minimizer of $P_\gamma$ for data $f_1.$
As induction hypothesis, we assume that the statement is true for all $\ell = 1,\ldots,r-1.$
We consider data $f_{[1,r]}$ and the minimizer $u^r$ whose rightmost jump location is $Z(r)<r.$
If $Z(r)=0,$ then $u^r$ is constant. By the minimizing property of a median, 
$\|\bar{u}-f_{[1,r]}\|_{\ell_w^1}$ $\leq \|u^r-f_{[1,r]}\|_{\ell_w^1}$ and thus $P_\gamma(\bar{u}) \leq P_\gamma(u^r).$ Hence,
$\bar{u}$ is a minimizer.
If $Z(r) \neq 0,$ we denote the first $Z(r)$ components of $\bar{u}$ by $\bar{u}'.$ $\bar{u}'$ is a minimizer of 
$P_\gamma$ for data $f_{[1,Z(r)]}$ by the induction hypotheses. Thus, 
$$
 P_\gamma (\bar{u}) \leq P_\gamma (\bar{u}') + \gamma + d_{[Z(r)+1,r]} \leq P_\gamma (u^r|_{\{1,\ldots,Z(r)\}}) + \gamma + d_{[Z(r)+1,r]} \leq P_\gamma (u^r).
$$
This implies that $\bar{u}$ minimizes $P_\gamma$ for data $f_{[1,r]}$ which completes the induction.
%First notice 
%	that we can compute the minimal Potts value $p$
%	using for data $f_{[\ell,r]}$
%	We further know that there is a candidate
%	$v^\ell$ such that 
	%obtain by Lemma \ref{lem:potts_minimizer_cand} a 
	%as an element of the set
	%$\argmin_{\ell=0,...,r-1} P_{\gamma}(u^{\ell}) + \gamma + d_{[\ell+1,r]}.$
%	We now show by induction that we can compute a minimizer for data $f_{[1,r]}$
%	from jump locations $Z(1),...,Z(r).$
%If data consist of only one point then a minimizer is given by $w^1 = f_{[1,1]}.$
%	Since $Z(r)$ is the rightmost jump point of a minimizer
%	  we get that
%	$w^{Z(r)} = (w^{Z(r)}, \mu_{[Z(r)+1, r]}, ...,\mu_{[Z(r)+1, r]})$
% minimizes the Potts functional if $w^{Z(r)}$ is a minimizer of the Potts functional of 
% reduced data $f_{[1,Z(r)]}.$ By induction hypothesis, we can compute such a  minimizer $w^{Z(r)}$ from $Z(1),...,Z(Z(r)).$ 
\end{proof}

Using Lemma~\ref{lem:reconstruction_of_minimizers} together with 
\eqref{eq:potts_value_candidate} we get the following {\em Potts core algorithm}
which is the core part of Algorithm~\ref{alg:findBestPartitionL1}.
Assume that, for $\ell= 1,\ldots,r-1,$ $Z(\ell)$ and $P_\gamma(u_\ell)$ 
have been calculated and stored. 
Calculate $P_\gamma(u^r)$
and $Z(r)$ of an arbitrary minimizer $u^r$ by \eqref{eq:potts_value_candidate}. 
We store these values and proceed with $r+1$ until we reach $n.$
Then we use Lemma~\ref{lem:reconstruction_of_minimizers} to construct a minimizer
for full data $f_{[1,n]},$ from the rightmost jump locations.
To get a deterministic algorithm we use the following selection rules: 
we take a candidate in \eqref{eq:potts_value_candidate} 
with smallest $l$, and we always choose smallest medians (when not unique).
The Potts core algorithm %(i.e., the recursive computation of $u^n$ using the described procedure) 
needs at least $O(n^2)$ steps, 
since we iterate over $r = 1,...,n$ in an outer loop and  over $\ell = 1,..., r$ in an inner loop.
Optimal time complexity can only be achieved if the operations in the inner  $\ell$-loop take constant time.
The critical computation in the inner loop is the evaluation of
the righthand side of \eqref{eq:potts_value_candidate} for fixed $\ell,$ $\ell=1,...,r$. 
The stored Potts functional value $P_\gamma(u^{\ell-1})$ can be simply
looked up in constant time. 
%OLD: of \eqref{eq:potts_value_candidate}
%The Potts functional values $P_\gamma(u^\ell),$ $\ell=0,...,r-1$ can be simply
%looked up in constant time since they are known from the previous iterations.
Thus, the remaining time critical computations
are the evaluations of the median deviations $d_{[\ell,r]}.$
So the challenge to get a fast $L^1$ Potts algorithm
is to provide fast computation of the $O(n^2)$ median deviations $d_{[\ell,r]}$ 
%\begin{align}\label{eq:med_dev}
%	d_{[\ell, r]} = \| f_{[\ell, r]} - (\mu_{[\ell, r]} , ..., \mu_{[\ell, r]})  %\|_{\ell^1_w}
%	 = \sum\nolimits_{i=\ell}^{r} w_i | f_{i} - \mu_{[\ell, r]} |
%\end{align}
within $O(n^2)$ time and to perform calculations \emph{in situ} in order not to consume extra memory.
}
\subsection{The indexed linked histogram}\label{sec:Histogram} 
For the fast computation of the median deviations \eqref{eq:med_dev}
we introduce the dynamic data structure \emph{indexed linked histogram.} 
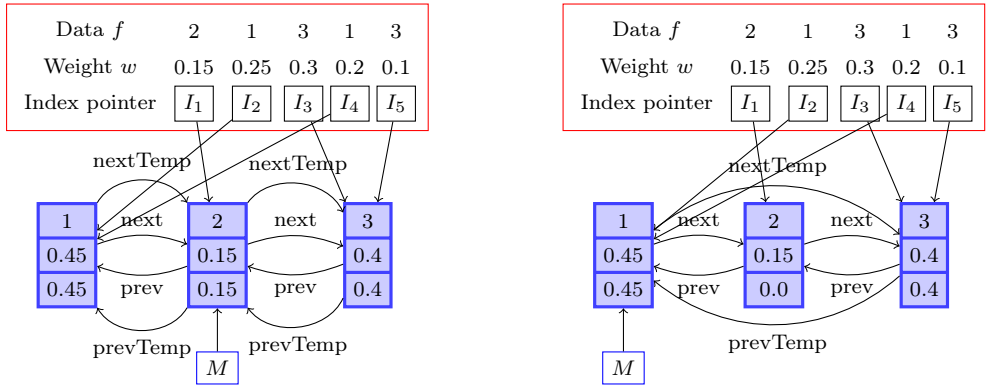
\begin{figure}
\centering
  \tikzstyle{histnode}=[rectangle split, rectangle split parts=3,thick,draw=blue!75,fill=blue!20, very thick]
  \tikzstyle{data}=[rectangle,draw=red!75,fill=red!20]
  \tikzstyle{point}=[draw=black,minimum size=1mm]
  \tikzstyle{median}=[rectangle,draw=blue,minimum size=3mm]
\begin{tikzpicture}[node distance=2.cm,bend angle=45,auto, scale=0.7, font=\footnotesize]
  \tikzstyle{every label}=[red]
\matrix [draw=red] (dataorg)
{
\node {Data $f$}; &
\node {2};  &
\node {1}; &
\node {3}; &
\node {1}; &
\node {3}; \\ 
\node {Weight $w$}; &
\node {0.15};  &
\node {0.25}; &
\node {0.3}; &
\node {0.2}; & 
\node {0.1}; \\
\node  {Index pointer}; &
\node [point] (p1) {$I_1$}; &
\node [point] (p2) {$I_2$}; &
\node [point] (p3) {$I_3$}; &
\node [point] (p4) {$I_4$}; &
\node [point] (p5) {$I_5$}; \\
};
       \node [histnode] (d2) [below of=dataorg, node distance=2.5cm]                      {2 \nodepart{second} 0.15 \nodepart{third} 0.15	};
    \node [histnode] (d1) [left of=d2]                                      {1 \nodepart{second} 0.45 \nodepart{third} 0.45};
    \node [histnode] (d3) [right of=d2]                      {3 \nodepart{second} 0.4 \nodepart{third} 0.4};
      \path[->] (d1) edge[bend left=20] node [above] {next} (d2) (d2) edge[bend left=20] node [above] {next} (d3);
      \path[->] (d3) edge[bend left=20] node [below] {prev} (d2) (d2) edge[bend left=20] node [below] {prev} (d1);
      \path[->] (d1) edge[bend left=60] node [above] {nextTemp} (d2) (d2) edge[bend left=60] node [above] {nextTemp} (d3);
      \path[->] (d3) edge[bend left=60] node [below] {prevTemp} (d2) (d2) edge[bend left=60] node [below] {prevTemp} (d1);
	\path[->] (p1) edge (d2);
	\path[->] (p2) edge (d1);
	\path[->] (p3) edge (d3);
	\path[->] (p4) edge (d1);
	\path[->] (p5) edge (d3);
	\node [median] (med) [below of=d2, yshift=0.5cm]                      {$M$};
	\path[->] (med) edge (d2);
\end{tikzpicture}
\hfill
\begin{tikzpicture}[node distance=2.cm,bend angle=45,auto, scale=0.7, font=\footnotesize]
\matrix [draw=red] (dataorg)
{
\node {Data $f$}; &
\node {2};  &
\node {1}; &
\node {3}; &
\node {1}; &
\node {3}; \\ 
\node {Weight $w$}; &
\node {0.15};  &
\node {0.25}; &
\node {0.3}; &
\node {0.2}; & 
\node {0.1}; \\
\node  {Index pointer}; &
\node [point] (p1) {$I_1$}; &
\node [point] (p2) {$I_2$}; &
\node [point] (p3) {$I_3$}; &
\node [point] (p4) {$I_4$}; &
\node [point] (p5) {$I_5$}; \\
};
     \node [histnode] (d2) [below of=dataorg, node distance=2.5cm]                      {2 \nodepart{second} 0.15 \nodepart{third} 0.0	};
    \node [histnode] (d1) [left of=d2]                                      {1 \nodepart{second} 0.45 \nodepart{third} 0.45};
    \node [histnode] (d3) [right of=d2]                      {3 \nodepart{second} 0.4 \nodepart{third} 0.4};
      \path[->] (d1) edge[bend left=20] node [above] {next} (d2) (d2) edge[bend left=20] node [above] {next} (d3);
      \path[->] (d3) edge[bend left=20] node [below] {prev} (d2) (d2) edge[bend left=20] node [below] {prev} (d1);
      \path[->] (d1) edge[bend left=40] node [above] {nextTemp}  (d3);
      \path[->] (d3) edge[bend left=40] node [below] {prevTemp}  (d1);
	\path[->] (p1) edge (d2);
	\path[->] (p2) edge (d1);
	\path[->] (p3) edge (d3);
	\path[->] (p4) edge (d1);
	\path[->] (p5) edge (d3);
	
		\node [median] (med) [below of=d1, yshift=0.5cm]                      {$M$	};
	\path[->] (med) edge (d1);
\end{tikzpicture}
\caption{Data structure indexed linked histogram.
The shaded rectangles depict the histogram nodes with the entries \emph{value,} \emph{weight,}  and \emph{temporary weight} (from top to bottom).
 Left: The temporary pointers at the beginning of the (inner) $\ell$-loop coincide with the non-temporary ones. Right: In the first $\ell$-iteration, the left-most weight $w_1$ is subtracted from the temporary weight of the node corresponding to $f_1 = 2.$
 Since the temporary weight of that node then equals zero, 
 it is removed from the temporary list. 
The temporary median pointer $M$ is shifted accordingly and the deviation is updated.}
\label{fig:indexedLinkedList}
\figspace
\end{figure}

\paragraph{Basic structure} 

A histogram associated with data $f = (f_1,\ldots,f_m)$ and weights $w_i$ is the image measure $f(w)$ of the measure $w$ given by the $w_i$ under the mapping $f.$ Thus the domain of the histogram is equal to the set of data values of $f$ and it assigns to each data value $f_j$
the sum $\sum_{i: f_i=f_j } w_i$ of the weights $w_i$ of all data points with value $f_j.$
An indexed linked histogram basically stores a histogram; for an example see \autoref{fig:indexedLinkedList}. It has four crucial characteristics.
(i) The histogram is realized as linked list
where the pairings \enquote{value $\mapsto$ weight} are realized as nodes called \emph{histogram nodes.}
(ii) The linked list is sorted by the values of the histogram nodes.
(iii) The histogram nodes have a twofold structure, that is,
they have permanent pointers and weights, and extra temporary pointers and temporary weights.
This allows to store a temporary histogram for partial data $(f_{[\ell,r]})$ without losing the histogram for the full data.
(iv) An array $I$ of \emph{index pointers}
links the position $i$ of each data item $f_i$ with its corresponding histogram node.

\paragraph{Building the histogram}\label{ssec:linked_hist}
Assume that we have a histogram for the data $f_{[1, r]}.$ Then we obtain a histogram for the data $f_{[1, r+1]}$ by
adding the weight $w_{r+1}$ to the histogram node $N$ with value $f_{r+1}$ (which has to be created, if not existent.) 
This involves searching (and possibly inserting) in a linked list which is a linear time task. 
Furthermore, the index pointer $I_{r+1}$ is set to point to $N.$
Then we find the median histogram node and store it in the \emph{median pointer} $M.$
The median pointer can be computed from scratch by a simple iteration over the sorted histogram in linear time
or, more efficiently, by restoring and updating the median pointer of the $r$th iteration.
We compute the deviation $d_{[1, r]}$ of the median from the data $f_{[1, r]}$ by simple summation. 
At last, we initialize the temporary structure by the permanent structure.
These are all linear time tasks.

\paragraph{Fast computation of median deviations}
Assume that, for data $f_{[\ell, r]},$ we are given an indexed linked histogram, 
a pointer to its median node, and the median deviation $d_{[\ell,r]}.$ Then we compute the median deviation $d_{[\ell+1, r]}$ for the reduced data $f_{[\ell+1, r]}$ as follows. 
\begin{enumerate}
\item[(i)] We remove the weight $w_\ell$ from the histogram node corresponding to the data $f_\ell.$ The crucial point is that
due to the indexing by the array $I$ we find this histogram node in constant time which is the time-critical task.
\item[(ii)] The weights below or above the (old) median node may have more than half of the total weight.
Therefore, we shift the median pointer downwards or upwards until we reach the (new) median node of the reduced histogram. 
Hereby, the number of required shifting operations depends on the fraction between maximum and minimum weight
as follows.
\begin{align}\label{eq:PointerShifts}
 \text{Updating the median requires  at most } 
 \left\lceil \frac {\max_i w_i}{\min_i w_i} \right\rceil
 \text{ pointer shifts. }
\end{align}
\item[(iii)] Along with each shift operation we update the median deviation, which is of constant runtime for each shift operation. Hence,
\begin{align} \label{eq:StepShifts} 
 \text{Computing the deviation $d_{[\ell+1,r]}$ needs at most } 
 \left\lceil \frac {\max_i w_i}{\min_i w_i} \right\rceil
 \text{ steps. }
\end{align} 
\end{enumerate}

\subsection{The fast $L^1$-Potts algorithm}\label{sec:OurAlgorithm}

We connect our data structure with the Potts core algorithm.
Our $L^1$-Potts algorithm is outlined in \autoref{alg:findBestPartitionL1}.

Using the indexed linked histogram we compute
the deviations $d_{[\ell,r-1]}$ as follows.
First, starting from a histogram for data $f_{[1,r-1]}$ we add data $f_{r}$
as explained in the paragraph on \emph{building the histogram} in \autoref{ssec:linked_hist}.
We obtain the deviation $d_{[1, r]}.$ 
To obtain the deviations $d_{\ell, r}$ we successively 
remove the left hand data items $f_\ell$ as described in the paragraph on \emph{fast computation of median deviations} in \autoref{ssec:linked_hist}.
This removal is done on the temporary structure of the histogram in order to save the histogram for data $f_{[1, r]}$ for the next update $r \to r+1.$

Moreover our data structure allows for \emph{in situ} computation of the $d_{[\ell,r]}$
right at the place where they are needed such that no additional memory is required.
More precisely, each deviation $d_{[1,r]},$ $d_{[2,r]},$ ..., $d_{[r,r]}$ produced in the inner $\ell$-loop
can immediately be used to compute  the values
	$P_{\gamma}(u^{\ell-1}) +  \gamma + 	\| f_{[\ell, r]} - (\mu_{[\ell, r]} , ..., \mu_{[\ell, r]})  \|_{\ell_w^1},$  ($\ell = 1,..., r$), cf. \eqref{eq:potts_value_candidate}.
After passing to the infimum over $\ell$ we obtain a minimizer for the data $f_{[1, r]}$
according to \eqref{eq:potts_value_candidate}, and the above deviations are not required any more.

%\paragraph{Implementation}
For further details and implementational aspects we refer to our Matlab implementation available for download at \url{http://pottslab.de}.

\begin{algorithm2e}
\footnotesize
\caption{Minimization of the $L^1$-Potts functional}
\label{alg:findBestPartitionL1}
\SetKwInOut{Input}{input}
\SetKwInOut{Output}{output}
\SetKwInOut{Local}{local}
\SetCommentSty{text}
\SetCommentSty{footnotesize}
\Input{Data vector $f \in \R^n, $ weight vector $w \in \R_+^n.$}
\Output{Minimizer $u$ of the (weighted) $L^1$-Potts functional}
\Local{Left and right interval bounds $\ell,r \in \N$;
 Potts values $P \in \R^{n+1}$;
 temporary values $p, \mu, d \in\R$ (candidate Potts value, median, and median deviation); indexed linked histogram $H$;
 array of right-most jumps $Z \in \N^n$; pointer to histogram node $M$; } 
\Begin{
$P_0 \leftarrow -\gamma$\;
$H \leftarrow$ Empty indexed linked histogram\; 
\For{$r \leftarrow 1$ \KwTo $n$}{
	$P_r \leftarrow \infty$ \tcc*{init candidate Potts value}
	Insert element $f_r$ with weight $w_r$ to histogram	$H$\;
		Set temporary values and pointers of the histogram $H$ to the permanent ones\;
	$M \leftarrow$ Pointer to median node of histogram $H$\;
	$\mu \leftarrow $ value of median node $M$ \tcc*{init the current median $\mu$}
	$d \leftarrow \sum_{i=1}^r w_i | \mu - f_i | $ \tcc*{init the current median deviation $d$}
	\For{$\ell \leftarrow 1$ \KwTo $r$}{
		$p \leftarrow P_{\ell-1} + \gamma + d$ \tcc*{compute candidate Potts value $p$}
		\tcc{if $p$ is lower than previous best Potts value $P_r$}
		\If{$p \leq P_r$}{
		$P_r \leftarrow p$ 	\tcc*{store new best Potts value}
			$Z_r \leftarrow \ell - 1$ \tcc*{update right-most jump}	
	}
	Temporarily remove the weight $w_\ell$ from the node which
	the  $I_\ell$ (cf. \autoref{sec:Histogram}) points to (if the weight of that node then equals zero, remove it from $H$)\;	
	Shift the median pointer $M$ towards the new median node of $H$
	and update the current median deviation $d$\;
}
}
Reconstruct the minimizer $u$ from the array of right-most jumps $Z$\;
}
\end{algorithm2e}

\subsection{Theorem on time and space complexity}\label{ssec:complexity}

We consider the continuous Potts functional $P_\gamma$ defined by \eqref{eq:potts_continuous_L1} 
and its discretizations $(P^k_\gamma)$ defined by \eqref{eq:potts_fully_discrete}.
The piecewise constant functions associated with $P^k_\gamma$ only jump in the finite sets $X^k$
(cf. \autoref{sec:gamma-convergence}) which allows us to identify such a piecewise constant function
with a vector in euclidean space. Letting $w^{(k)}_i =\lambda(I_j^{(k)})$ be the length of the interval
$I_j$ (cf. \eqref{eq:potts_fully_discrete}) the discretized Potts problem \eqref{eq:potts_fully_discrete}
reads 
\begin{align}
\gamma J(u) +  \sum\nolimits_{i} w_i |u_i - (S_k f)_i|  
 = \gamma J(u) + \| u - S_k f \|_{\ell^1_w} \to \min
\end{align}
which is a weighted discrete $L^1$-Potts problem of type \eqref{eq:potts_discrete_non_equidistant}. 
If the weights behave well, we have the following statement on the complexity of the algorithm
if the mesh size goes to $0$.
\begin{theorem}\label{thm:timeSpaceComplNonAqui}
  Consider a sequence of weight vectors $w^{(1)},w^{(2)},\ldots$ where each weight vector 
  $w^{(k)} = (w^{(k)}_1,\ldots,w^{(k)}_{n_k})$ is of length $n_k$ and the sequence $n_k$ increases.
  We assume that 
  \begin{align}\label{eq:AssumptionOnWi}
    \frac{\max_i w^{(k)}_i} {\min_i w^{(k)}_i } \leq C  \qquad \text{with $C$ independent of $k$.}
  \end{align}  
  Then the proposed $L^1$-Potts algorithm computes an exact minimizer of the 
  discrete weighted $L^1$-Potts functional within 
  $O(n_k^2)$ time and $O(n_k)$ space as $k$ increases. 
\end{theorem}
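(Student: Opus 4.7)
The plan is to establish the time and space bounds by analyzing the inner and outer loops of \ref{alg:findBestPartitionL1} separately, exploiting the uniform bound \eqref{eq:AssumptionOnWi} to control the cost of the median-maintenance routine \ref{alg:updateMedianDeviationTemp}. Correctness has already been argued in the description of the Potts core algorithm (every length-$r$ partial problem is solved optimally in terms of previously computed optimal solutions), so the only substantive work is the complexity accounting.

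First I would bound the per-iteration cost of the outer $r$-loop outside the inner loop. On level $k$, processing the $r$-th data point in \ref{alg:insertElement} requires inserting $f_r$ into the sorted linked histogram, and, as a preparation for the inner loop, recomputing the median, the deviation, and the weights above/below the median from scratch, plus resetting all temporary pointers and weights to their non-temporary counterparts. Each of these operations is a single linear traversal of the histogram, whose length is bounded by $r \leq n_k$. Hence the outer loop contributes $O(n_k)$ per iteration, $O(n_k^2)$ in total, independent of the inner loop.

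Next I would bound the per-iteration cost of the inner $l$-loop. The removal step \ref{alg:removeElemTemp} locates the affected histogram node in $O(1)$ via the index array $A$, and decrements the temporary weight and count in constant time. The key observation, already recorded in \eqref{eq:PointerShifts} and \eqref{eq:StepShifts}, is that rebalancing the temporary median via \ref{alg:updateMedianDeviationTemp} requires at most $\lceil \max_i w_i^{(k)} / \min_i w_i^{(k)} \rceil$ pointer shifts, and each shift updates the running deviation $d$ in constant time. Under assumption \eqref{eq:AssumptionOnWi} this quantity is uniformly bounded by $\lceil C \rceil$, so each inner iteration costs $O(1)$ with a constant independent of $k$. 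Summed over $l = 1,\ldots,r$, the inner loop contributes $O(n_k)$ per outer iteration, yielding $O(n_k^2)$ overall and completing the time bound.

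For the space bound I would simply enumerate the data structures and check each is $O(n_k)$: the arrays $B$ and $p$ have $n_k$ entries; the index array $A$ has $n_k$ pointers; the sorted linked list $L$ inside the indexed linked histogram contains at most $n_k$ HistNodes, each of constant size, since each data point contributes to exactly one node. The reconstruction step \ref{alg:segmentationFromPartition} merely traverses $p$ while emitting medians on discrete intervals, and uses $O(n_k)$ auxiliary space. The main (and only) conceptual obstacle is justifying the bound on median pointer shifts: removing a single weight $w_l$ unbalances the halves by at most $w_l \leq \max_i w_i^{(k)}$, while each median shift transfers at least $\min_i w_i^{(k)}$ across the boundary, so at most $\lceil \max_i w_i^{(k)} / \min_i w_i^{(k)} \rceil \leq \lceil C \rceil$ shifts suffice, which is exactly \eqref{eq:PointerShifts}. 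With this in hand, the theorem follows from the two loop analyses above, and Theorem~A is recovered as the special case $C=1$.
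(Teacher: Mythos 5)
Your proposal is correct and follows essentially the same route as the paper's proof: the outer $r$-loop costs $O(n_k)$ per iteration via linear histogram traversals, the inner $l$-loop costs $O(1)$ per iteration because assumption \eqref{eq:AssumptionOnWi} bounds the number of median pointer shifts by $\lceil C \rceil$ (exactly the argument recorded in \eqref{eq:PointerShifts} and \eqref{eq:StepShifts}), and the space bound follows by enumerating the $O(n_k)$-sized structures together with the in-situ computation of the deviations $d_{[l,r]}$. The only difference is cosmetic: the paper settles exactness by noting that the correctness proof of Friedrich et al.\ carries over verbatim to the weighted $L^1$ data term, whereas you appeal to the dynamic-programming argument sketched in the core-algorithm description — both are the same deferral at the same level of detail.
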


In the case of equal weights we obtain the following corollary.
\begin{corollary}
  The proposed $L^1$-Potts algorithm computes an exact minimizer of the 
  discrete $L^1$-Potts functional \eqref{eq:potts_L1_discFir} within $O(n^2)$ time and $O(n)$ space.
\end{corollary}

{\em Proof of \autoref{thm:timeSpaceComplNonAqui}.}
We begin with the time complexity. In the paragraph on the Potts core algorithm we have seen 
that the Potts core algorithm needs $O(n^2)$ operations. Assumption \eqref{eq:AssumptionOnWi} ensures that 
the medians $\mu_{[l,r]}$ in the inner $\ell$-loop of \autoref{alg:findBestPartitionL1} can be updated within $O(1);$ 
cf. \eqref{eq:StepShifts}. 
The same is true for the deviations $d_{[\ell,r]}$ given by \eqref{eq:med_dev}; cf. \eqref{eq:PointerShifts}.
This guarantees that the operations in the inner $\ell$-loop are of constant complexity $O(1).$
All operations in the outer $r$-loop are of complexity $O(n)$ as explained in the paragraph
\emph{building the histogram} in \autoref{ssec:linked_hist}.
The final reconstruction step is of order $O(n \log n).$ 
(This can be further reduced to $O(n),$ if the medians computed in the main iteration are stored.)
Hence, the overall time complexity is $O(n^2).$

 We come to space complexity. 
The only items depending on the length of input data 
 in the data structure indexed linked histogram are the number of histogram nodes 
 and the indexing array $I.$
  Both have at most length $n.$
  Furthermore the arrays $P$ and $Z$ storing the Potts values $P_\gamma(u^r)$
  and the right-most jump locations (see \autoref{sec:PottsCore})
  have length of order $n.$
  Summing up, \autoref{alg:findBestPartitionL1} has space complexity $O(n).$

{\noop 
%It remains to show that our algorithm indeed provides an exact minimizer of the Potts problem.
%We have seen in the description of the algorithm Section \ref{sec:PottsCore} that the algorithm computes a global minimum of the discrete $L^1$-Potts problem. Thus it remains to show the assertions on its complexity.}
%By Lemma \ref{lem:potts_minimizer_cand} and Lemma \ref{lem:reconstruction_of_minimizers} the algorithm computes indeed a solution of the $L^1$ problem.
%VORSCHLAG: 
It remains to show the correctness of the proposed algorithm. For all partial data $f_{[1,r]}$, the algorithm computes a rightmost jump location $Z(r)= \ell^* -1$ 
where $\ell^*$ is a minimizing argument in \eqref{eq:potts_value_candidate}. It follows from Lemma \ref{lem:potts_minimizer_cand} that this $Z(r)$ is the jump location of a minimizer  
for data $f_{[1,r]}.$ From these data $Z(r),$ $r=1,\ldots,n,$ it produces a function which according to Lemma \ref{lem:reconstruction_of_minimizers} is a minimizer    
of the $L^1$ Potts problem.}
%
%Examining the proof of \cite[Theorem 2]{friedrich2008complexity} shows that it also applies to the weighted 
%$L^1$-Potts problem, and thus, our algorithm actually computes a minimizer of the weighted $L^1$-Potts functional.
\endproof

We give some comments.
We note that our data structure allows in situ computation of the $d_{[\ell, r]}$
which makes our algorithm an $O(n)$ space algorithm.
The asymptotic time and space complexity of our $L^1$-Potts algorithm equals 
that of the $L^2$-Potts algorithm proposed by F. Friedrich et al.\ \cite{friedrich2008complexity}.
Remarkably, our algorithm is not only asymptotically of comparable runtime.
The actual runtime of the $L^1$-Potts algorithm  is only about $20 \%$ higher than that of the $L^2$-Potts algorithm, for reasonably large signals; we refer to 
 \autoref{fig:potts_runtime} for a comparison of runtimes.
 
\begin{figure}
\centering
\pgfkeys{/pgfplots/axis labels at tip/.style={
        xlabel style={
            yshift=-1ex
        },
        ylabel style={
            yshift=-1ex
        }
    }
}
%\noexp
	\begin{tikzpicture}
      \begin{loglogaxis}[
      compat=newest,
      ylabel shift={-1ex},
      xlabel shift={-1ex},
        font=\scriptsize,
        width=0.53\textwidth,
        height=0.35\textwidth,
        xlabel=$n$,
        ylabel= Runtime in seconds,
        xlabel= Length of signal $n$,
        legend style={at={(0.05,0.95)},anchor=north west},
        grid=major
        ]
        \pgfplotsset{every axis plot post/.append style={line width = 0.5pt}} ;
        \addplot table[x index=0,y =RuntimeL1W] {Experiments/runtimes/runtimes.table};   % Here is the data file
         \addlegendentry{$L^1$-Potts};
        \addplot table[x index=0,y =RuntimeL2] {Experiments/runtimes/runtimes.table};   % Here is the data file
        \addlegendentry{$L^2$-Potts};
         \addplot[mark=none, dashed, domain=1000:20000] {(x/8000) ^ 2 };  % 
        \addlegendentry{$O(n^2)$};
      \end{loglogaxis}
    \end{tikzpicture}
    \hfill
   %\noexp
    	\begin{tikzpicture}
      \begin{semilogxaxis}[
        font=\scriptsize,
              compat=newest,
        ylabel shift={-1ex},
      xlabel shift={-1ex},
        width=0.53\textwidth,
        height=0.35\textwidth,
        scaled ticks=false,
        ymin=0,
        ytick={0,1,2},
        ylabel= $\frac{ \text{Runtime $L^1$-Potts} }{ \text{Runtime $L^2$-Potts}}$,
        xlabel= Length of signal $n$,
        tick label style={font=\footnotesize},
        grid=major
        ]
        \pgfplotsset{every axis plot post/.append style={line width = 0.5pt}} ;
        \addplot table[x index=0,y =divisionW] {Experiments/runtimes/runtimes.table};   % Here is the data file
     \end{semilogxaxis}
    \end{tikzpicture}
    \caption{Runtimes of the $L^1$ and $L^2$-Potts algorithms on a logarithmic scale  (left) and relative runtimes (right).
    For larger signals, the $L^1$ algorithm is almost as fast as the $L^2$ algorithm.
    The slower runtime for smaller signals is mainly due to overhead in the histogram structure. 
    The experiments were conducted on an Apple MacBook Pro, with Intel Core 2 Duo 2.66 GHz and 8 GB RAM,
using our implementation (\url{http://pottslab.de}).
     }
     \label{fig:potts_runtime}
     \figspace
\end{figure}
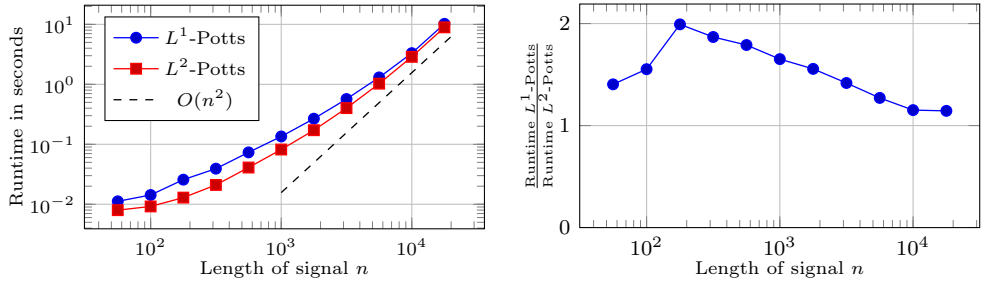

We next show that the condition \eqref{eq:AssumptionOnWi} in \autoref{thm:timeSpaceComplNonAqui} cannot be dropped. To this end, we consider data $f_i$ and weights $w_i$ given by the following table,
\begin{align*}
\begin{array}[c]{c|c|c|c|c|c|c}
i   & 1& 2& 3 & 4 &   \ldots & n    \\ \hline
f_i & 1& n& 2 & n-1 &   \ldots& n/2  \\ \hline
w_i & 1/2 & 1/4 & 1/8 & 1/16   & \ldots & 1/ 2^{n+1}     \\
\end{array}
\end{align*}
We see that, for updating the median in the inner $l$-loop, our algorithm shifts the median pointer from the outer left side to 
the outer right side and vice versa, respectively. Thus the complexity of the median updating is of order $n,$ and the algorithm 
is no longer of complexity $O(n^2).$

\subsection{Numerical experiments on the $L^1$-Potts functional} \label{sec:TheAlgorithmPlainExperiments}

%%-------
% figure dimensions
\def\thisfigheight{0.35\textwidth}
\def\thisfigwidth{0.34\textwidth}
\setlength{\tabcolsep}{1pt}
%%-------

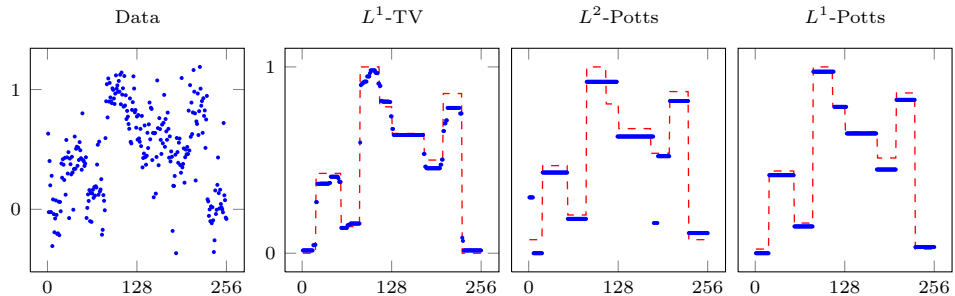
\begin{figure}
\centering
\def\noiseLevel{0.2}
\def\noiseType{LaplaceNoise}
\def\figfolder{ExperimentsRev}
\begin{tabular}{cccc}
	%\noexp
   	\begin{tikzpicture}
      \begin{axis}[
        MyAxisStyle,
        title={Data}
        ]
        \pgfplotsset{every axis plot post/.append style={line width = 0.5pt}} ;
        \addplot+[only marks, mark size=0.5] table[x index=0,y index=2] {\figfolder/\noiseType2/\noiseType\noiseLevel.table};   
     \end{axis}
    \end{tikzpicture}
    &
  % \noexp
     \begin{tikzpicture}
      \begin{axis}[
        MyAxisStyle,
        title={$L^1$-TV}
        ]
        \pgfplotsset{every axis plot post/.append style={line width = 0.5pt}} ;
        \addplot+[only marks, mark size=0.5] table[x index=0,y=TVL1] {\figfolder/\noiseType2/\noiseType\noiseLevel.table};   
        \addplot+[no marks, dashed] table[x index=0,y index=1] {\figfolder/\noiseType2/\noiseType\noiseLevel.table};   
     \end{axis}
    \end{tikzpicture}
    &
     %\noexp
     	\begin{tikzpicture}
      \begin{axis}[
        MyAxisStyleB,
        title={$L^2$-Potts}
        ]
        \pgfplotsset{every axis plot post/.append style={line width = 0.5pt}} ;
        \addplot+[only marks, mark size=0.5] table[x index=0,y=pottsL2] {\figfolder/\noiseType2/\noiseType\noiseLevel.table};   
        \addplot+[no marks, dashed] table[x index=0,y index=1] {\figfolder/\noiseType2/\noiseType\noiseLevel.table};   
     \end{axis}
    \end{tikzpicture} 
   &
 % \noexp
     	\begin{tikzpicture}
      \begin{axis}[
        MyAxisStyleB,
 title = $L^1$-Potts
        ]
        \pgfplotsset{every axis plot post/.append style={line width = 0.5pt}} ;
        \addplot+[only marks, mark size=0.5] table[x index=0,y=pottsL1] {\figfolder/\noiseType2/\noiseType\noiseLevel.table};   
        \addplot+[no marks, dashed] table[x index=0,y index=1] {\figfolder/\noiseType2/\noiseType\noiseLevel.table};   
     \end{axis}
    \end{tikzpicture}
   \end{tabular}
   \figspace
    \caption{Laplacian noise with $\sigma = \noiseLevel$ (ground truth: dashed line.)
    The $L^1$-TV reconstruction exhibits staircasing effects.
   The $L^2$-Potts reconstructions is not robust to strong outliers.    The $L^1$-Potts procedure recovers the original piecewise constant signal almost perfectly.
     }
     \figspace
     \label{fig:laplacian_noise}
\end{figure}

We compare the $L^1$-Potts functional, the $L^2$-Potts functional and the $L^1$-TV functional w.r.t. to
their capabilities to reconstruct jump-sparse signals 
under different types of noise.
{\noop
For $L^1$-TV minimization we used the implementation of \cite{clason2009duality} with a maximum of 100 outer and 1000 inner iterations.
In all experiments, 
we adjusted the model parameter $\gamma$ so that
the solutions were visually closest to the ground truth.}

Our first experiment deals with additive Laplacian noise; 
see \autoref{fig:laplacian_noise}.
Every data point is corrupted by a random variable which is distributed according to the Laplacian probability density of variance $\sigma^2$
\begin{equation*}
	p(x) = \tfrac{1}{\sqrt{2}\sigma} e^{-\frac{\sqrt{2}}{\sigma}|x|}, \quad\text{with }\sigma > 0.
\end{equation*}
Our next experiment considers salt and pepper type noise; see \autoref{fig:potts_sap_small}.
Here, a fraction $p$ of a signal $g$ is set to a random variable $\eta,$ 
i.e., the data are given by
\begin{align*}
	f_i = g_i, &\text{ with probability }1-p   \qquad\text{and}\qquad f_i = \eta \text{ with probability }p.
\end{align*}
In our experiments, $\eta$ is distributed uniformly in the interval $[0,1].$
\begin{figure}
\centering
\def\noiseLevel{0.4}
\def\thisdatafile{ExperimentsRev/SaPNoise/SaPNoise\noiseLevel.table}
\begin{tabular}{cccc}
%\noexp
   	\begin{tikzpicture}
      \begin{axis}[
        MyAxisStyle,
        title=Data
        ]
        \pgfplotsset{every axis plot post/.append style={line width = 0.5pt}} ; 
        \addplot+[only marks, mark size=0.5] table[x index=0,y index=2] {\thisdatafile};   
     \end{axis}
    \end{tikzpicture}
   &
  % \noexp
   \begin{tikzpicture}
      \begin{axis}[
        MyAxisStyle,
        title=$L^1$-TV
                ]
        \pgfplotsset{every axis plot post/.append style={line width = 0.5pt}} ;
        \addplot+[only marks, mark size=0.5] table[x index=0,y=TVL1] {\thisdatafile};   
        \addplot+[no marks, dashed] table[x index=0,y index=1] {\thisdatafile};   
     \end{axis}
    \end{tikzpicture}
    &
   % \noexp
     	\begin{tikzpicture}
      \begin{axis}[
        MyAxisStyleB,
        title=$L^2$-Potts
        ]
        \pgfplotsset{every axis plot post/.append style={line width = 0.5pt}} ;
        \addplot+[only marks, mark size=0.5] table[x index=0,y=pottsL2] {\thisdatafile};   
        \addplot+[no marks, dashed] table[x index=0,y index=1] {\thisdatafile};   
     \end{axis}
    \end{tikzpicture}
   &
   %\noexp
     	\begin{tikzpicture}
      \begin{axis}[
        MyAxisStyleB,
        title=$L^1$-Potts
                ]
        \pgfplotsset{every axis plot post/.append style={line width = 0.5pt}} ;
        \addplot+[only marks, mark size=0.5] table[x index=0,y=pottsL1] {\thisdatafile};   
        \addplot+[no marks, dashed] table[x index=0,y index=1] {\thisdatafile};   
     \end{axis}
    \end{tikzpicture}
   \end{tabular}
   \figspace
    \caption{
A signal corrupted by salt and pepper noise: $40 \%$ of the data points are destroyed. 
The $L^2$-Potts solution has few of the jumps of the original signal.
The $L^1$-TV reconstruction gets some plateaus but several outliers remain. 
The $L^1$-Potts solution yields an almost perfect reconstruction.
}
 \label{fig:potts_sap_small}
 \figspace
\end{figure}
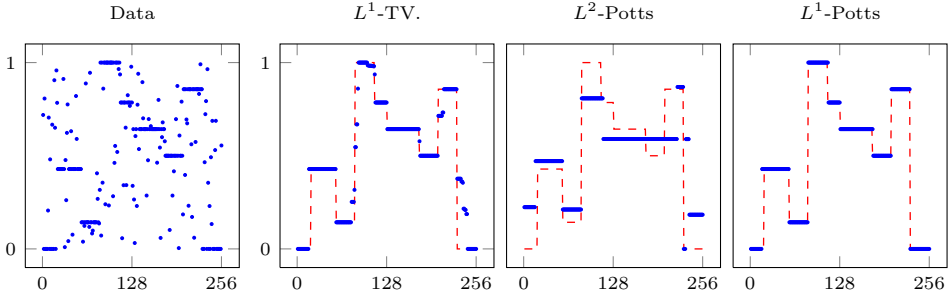
Our last experiment compares the reconstruction methods under Gaussian noise of variance $\sigma^2,$
see \autoref{fig:potts_gaussian_small}.
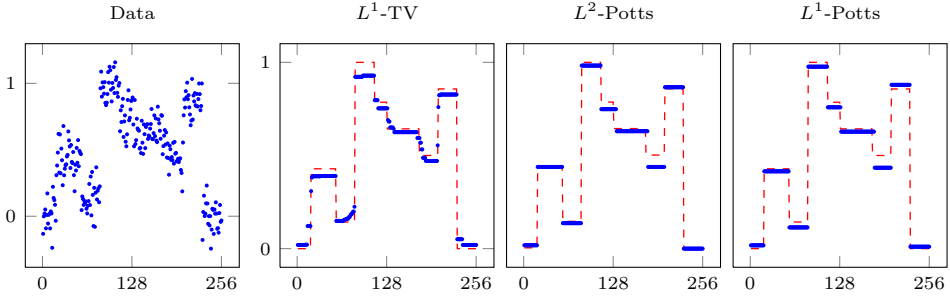
\begin{figure}
\centering
\def\noiseLevel{0.1}
\def\thisdatafile{ExperimentsRev/WhiteNoise/WhiteNoise\noiseLevel.table}
\begin{tabular}{cccc}
	%\noexp
   	\begin{tikzpicture}
      \begin{axis}[
      MyAxisStyle,
        title=Data
                ]
        \pgfplotsset{every axis plot post/.append style={line width = 0.5pt}} ;
        \addplot+[only marks, mark size=0.5] table[x index=0,y index=2] {\thisdatafile};   
     \end{axis}
    \end{tikzpicture}
   &
   %  \noexp
     \begin{tikzpicture}
      \begin{axis}[
MyAxisStyle,
        title= $L^1$-TV
        ]
        \pgfplotsset{every axis plot post/.append style={line width = 0.5pt}} ;
        \addplot+[only marks, mark size=0.5] table[x index=0,y=TVL1] {\thisdatafile};   
        \addplot+[no marks, dashed] table[x index=0,y index=1] {\thisdatafile};   
     \end{axis}
    \end{tikzpicture}
    &
    % \noexp 
     	\begin{tikzpicture}
      \begin{axis}[
MyAxisStyleB,
title = $L^2$-Potts
                ]
        \pgfplotsset{every axis plot post/.append style={line width = 0.5pt}} ;
        \addplot+[only marks, mark size=0.5] table[x index=0,y=pottsL2] {\thisdatafile};   
        \addplot+[no marks, dashed] table[x index=0,y index=1] {\thisdatafile};   
     \end{axis}
    \end{tikzpicture}
  &
   %  \noexp
     	\begin{tikzpicture}
      \begin{axis}[
MyAxisStyleB,
        title=$L^1$-Potts
        ]
        \pgfplotsset{every axis plot post/.append style={line width = 0.5pt}} ;
        \addplot+[only marks, mark size=0.5] table[x index=0,y=pottsL1] {\thisdatafile};   
        \addplot+[no marks, dashed] table[x index=0,y index=1] {\thisdatafile};   
     \end{axis}
    \end{tikzpicture}
   \end{tabular}
   \figspace
    \caption{
    In presence of Gaussian noise with $\sigma = \noiseLevel,$
    both the $L^1$- and the $L^2$-Potts algorithm give almost perfect reconstructions.
    The $L^1$-TV reconstruction does not find all plateaus.
 } \label{fig:potts_gaussian_small}
 \figspace
\end{figure}
Summing up, we observe that, in all three experiments, the minimizers of the $L^1$-Potts functional
approximate the ground truth very closely. For Gaussian noise the $L^2$ and the $L^1$-Potts perform equally
well, whereas, for Laplacian and salt and pepper noise, $L^1$ Potts minimization turns out to be the method of choice.

\section{Robustness of the $L^1$-Potts functional to mildly blurred data}\label{sec:smallSizeKernels}

In this section, we consider the $L^1$-Potts functional \eqref{eq:potts_continuous_L1} 
associated with blurred data $f= K*g$. Here $g$ denotes a piecewise constant function on $[0,1]$ and 
$K$ is a convolution kernel. Concerning $K$ we assume the following throughout this section. 

\begin{assumption}\label{as:KernelK}
{\normalfont
We assume that $K$ is symmetric, compactly supported, 
nonnegative with $K>0$ in a neighborhood of the origin, and that it has total mass $1.$ }
\end{assumption}

{\noop In order to properly define the convolution product $K \ast g$ 
we extend the piecewise constant function $g$ to the left of $0$ and to the right of $1$ 
by its value on the leftmost and rightmost interval, respectively. 
(With minor technical modifications our results are also valid for periodical extensions, zero padding and for extensions by reflection.)   }

From \autoref{fig:potts_deconv_small} we see that neither $L^1$-TV nor $L^2$-Potts minimization are able
to reconstruct such blurred data $f= K*g.$ For $L^1$-Potts minimization, however, \autoref{fig:potts_deconv_small}
suggests that  it has some built-in blind deconvolution property: Without knowing the kernel $K,$ the signal $g$
is reconstructed  from data $f= K*g.$ We can even prove this blind deconvolution property:
There are parameters $\gamma > 0$ and a maximal kernel size $\kappa >0$
such that for all convolution kernels $K$ with smaller support size 
	\begin{align}\label{eq:potts_deconv_intrinsic}
		g = \argmin_{u\in \PC}   \gamma J(u)  + \| u - f\|_1 , \quad \text{ where } f = K \ast g.
	\end{align}

This statement and the corresponding discretized version are the main results of this section;
the precise formulations are \autoref{thm:deconvolution_main_extended} and  
\autoref{thm:deconvolution_main_extended_discr}.
{\noop We note that for our results the specific type of the kernel is irrelevant; 
e.g., truncated Gaussian or indicator functions may be the underlying unknown kernel.}

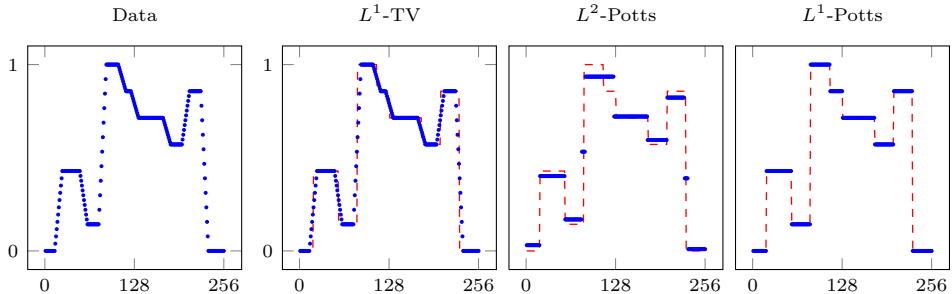
\begin{figure}
\centering
\def\convolutionSize{11}
\def\thisdatafile{ExperimentsRev/Convolution/pottsConvolution\convolutionSize.table}
\begin{tabular}{cccc}
   	\begin{tikzpicture}
	%\noexp
      \begin{axis}[
        MyAxisStyle,
        title=Data
        ]
        \pgfplotsset{every axis plot post/.append style={line width = 0.5pt}} ;
        \addplot+[only marks, mark size=0.5] table[x index=0,y index=2] {\thisdatafile};   
     \end{axis}
    \end{tikzpicture}
    & 
  %  	\noexp
    \begin{tikzpicture}
      \begin{axis}[
        MyAxisStyle,
        title=$L^1$-TV
        ]
        \pgfplotsset{every axis plot post/.append style={line width = 0.5pt}} ;
        \addplot+[only marks, mark size=0.5] table[x index=0,y=TVL1] {\thisdatafile};   
        \addplot+[no marks, dashed] table[x index=0,y index=1] {\thisdatafile};   
     \end{axis}
    \end{tikzpicture}
 & 
 	%\noexp
     	\begin{tikzpicture}
      \begin{axis}[
        MyAxisStyleB,
        title=$L^2$-Potts
        ]
        \pgfplotsset{every axis plot post/.append style={line width = 0.5pt}} ;
        \addplot+[only marks, mark size=0.5] table[x index=0,y=pottsL2] {\thisdatafile};   
        \addplot+[no marks, dashed] table[x index=0,y index=1] {\thisdatafile};   
     \end{axis}
    \end{tikzpicture}
   & 
   %	\noexp
     	\begin{tikzpicture}
      \begin{axis}[
        MyAxisStyleB,
       title=$L^1$-Potts
        ]
        \pgfplotsset{every axis plot post/.append style={line width = 0.5pt}} ;
        \addplot+[only marks, mark size=0.5] table[x index=0,y=pottsL1] {\thisdatafile};   
        \addplot+[no marks, dashed] table[x index=0,y index=1] {\thisdatafile};   
     \end{axis}
    \end{tikzpicture}
   
   \end{tabular}
    \caption{
    Blurred data $f = K*g,$ where $K$ is the moving average of size $\convolutionSize.$ 
    The $L^1$-TV method reconstructs in the blurred signal itself.
    The $L^2$-Potts method does not only not reconstruct all original plateaus,
    but even adds non-existing ones.
    The minimizer of the $L^1$-Potts recovers the piecewise constant signal $g$ perfectly.
 } \label{fig:potts_deconv_small}
\end{figure}

Furthermore, the experiments in \autoref{fig:introductory_example} 
suggest, that the blind deconvolution property of the $L^1$-Potts solutions even persists under noise. 
Here the quality of the reconstructions barely depends on the type of noise.

In order to show \eqref{eq:potts_deconv_intrinsic},
we first need a series of lemmas.
{\noop In these lemmas we throughout consider non-constant piecewise constant functions $g$
since, for constant signals $g$, \eqref{eq:potts_deconv_intrinsic} is obviously fulfilled.}
We denote the jump locations of the piecewise constant function $g$ by $0 < j_1 < ... < j_n < 1$
and let $l_{\min}$ be the minimal interval length given by this partition; cf. \autoref{fig:notation}.

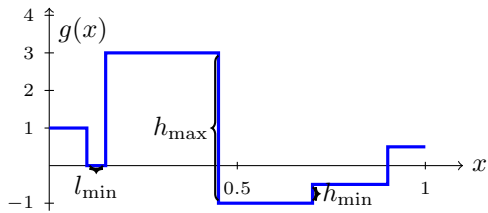
\begin{figure}[h!]
\begin{minipage}{0.66\textwidth}
\centering
%\noexp
	  \begin{tikzpicture}[x=5cm,y=0.5cm]
	\draw[->] (0,0) -- (1.1,0) node[right] {$x$};
	\draw[->] (0,-1.2) -- (0,4.2) node[below right ] {$g(x)$};
	\foreach \x in {0.5,1}
	\draw (\x,-.1) -- (\x,.1) node[below=4pt] {$\scriptstyle\x$};
	\foreach \y in {-1,1,2,3,4}
	\draw (-.0125,\y) -- (.0125,\y) node[left=4pt] {$\scriptstyle\y$};
	
	\draw [draw=blue,very thick] 
	(0, 1) -- ++ (0.1, 0) -- ++ (0, -1) node[inner sep=0pt] (l1) {} -- ++ (0.05,0) node[inner sep=0pt] (l2) {} -- ++ (0, 3) -- ++ (0.3, 0) node [inner sep=0pt] (H1) {} -- ++  (0, -4) node (H2) [inner sep=0pt] {} --++ (0.25,0) node [inner sep=0pt] (h1) {} -- ++ (0,0.5) node [inner sep=0pt] (h2) {}  --++ (0.2,0) --++ (0,1) --++ (0.1,0);
	\draw [thick,decorate,decoration={brace,mirror}] (l1) -- (l2) node [below,black,midway] {$l_{\min}$};
	\draw [thick,decorate,decoration={brace,mirror}] (H1) -- (H2) node [left,black,midway] {$h_{\max}$};
	\draw [thick,decorate,decoration={brace,mirror}] (h1) -- (h2) node [right,black,midway] {$h_{\min}$};
		 \end{tikzpicture}
	 \end{minipage}
		 \begin{minipage}{0.33\textwidth}
	 \caption{Notation used in \autoref{sec:smallSizeKernels}. $l_{\min}$ denotes the minimum interval length, 
	 $h_{\max}$ the maximum jump height and $h_{\min}$ the minimum jump height of a piecewise constant function $g.$ }
	 \label{fig:notation}	 
\end{minipage}
\end{figure}
For positive $\kappa \leq \frac{l_{\min}}{6}$ we consider
intervals $J_i$ around the jump location $j_i$ given by
\begin{align} \label{eq:defJi}
	J_i = [j_i - \kappa, j_{i} + \kappa].
\end{align}
We  denote the complementary intervals $N_i$ lying in between the jump locations by
\begin{align} \label{eq:defNi}
	N_i = (j_{i-1} + \kappa, j_{i} - \kappa).
\end{align}
On the left boundary, we set $N_1 = [0, j_1)$ and on the right boundary $N_{n+1} = (j_n + \kappa, 1].$

\begin{lemma}\label{lem:shift_lemma1}
	For  any $u_0 \in \PC$, there is $u_1 \in \PC$ 
	such that $u_1$ has no jump in the union $\bigcup_i N_i$ and 
	such that 
$
		P_\gamma(u_1) \leq P_\gamma(u_0).
$
\end{lemma}
\begin{proof}
    Consider an arbitrary index $i$ and
	assume that $u_0$ jumps in $N_i.$
	We first consider the case where $u_0$ has more than one jump in $N_i.$
	In this case, we define $u_1 = f$ on $N_i$  and $u_1 = u_0$ elsewhere. 
	Since $f=g$ is constant on $N_i,$ we have  $J(u_1) \leq J(u_0)$ 
	and,   $\norm{f-u_1}_1 \leq \norm{f-u_0}_1.$ 
	Hence $P_\gamma(u_1) \leq P_\gamma(u_0)$ which shows the lemma in this case.

	It remains to prove the case where $u_0$ has exactly one jump in $N_i,$ say at $x \in N_i = (l,r).$
	To this end we compare the value $d_l = |f-u_0|=|g-u_0|$ on $(l, x)$ with the value $d_r =  |f-u_0| = |g- u_0|$ on $(x, r).$
	If $d_l \geq d_r$ we shift the jump position to the left, 
	that means, we let $u_1 = u_0(x^+)$ on $N_i$ and $u_1 = u_0$ else. 
	The number of jumps 
	does not increase,
	$|f-u_1| \leq |f-u_0|$ on $N_i$ and thus
	\[
		\norm{f-u_1}_1 = \|(f-u_0)\mid_{N_i^C}\|_1  +  \|(f-u_1)\mid_{N_i}\|_1  \leq \norm{f-u_0}_1.
	\]
	If $d_l < d_r,$ we define $u_1 = u_0(x^-)$ on $N_i$ and $u_1 = u_0$ elsewhere.
	Then, an analogous argument completes the proof.
\end{proof}

 We use the notation $h_{\max}$ and $h_{\min}$ for the maximal and minimal jump height of $g,$
 respectively.
\begin{lemma}\label{lem:at_least_one_jump}
	Let $u_1 \in \PC$ with no jumps in $\bigcup_i N_i.$ 
	If
	\begin{align}\label{eq:assumption_lemma_at_least_one_jump}
		6 h_{\max} \kappa \leq \tfrac12 h_{\min} (l_{\min} - 2\kappa) - \gamma,
	\end{align}
	then there is $u_2 \in \PC$ such that
	$
		P_\gamma(u_2) \leq P_\gamma(u_1)
	$
	and such that $u_2$ has at least one jump in each $J_i.$ 
\end{lemma}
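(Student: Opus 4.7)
The plan is to localize the construction of $u_2$ to the maximal runs of the index set $S = \{i : u_1 \text{ has no jump in } J_i\}$. If $S$ is empty, take $u_2 = u_1$. Otherwise, fix a maximal run $\{i_1, \ldots, i_2\} \subseteq S$ of length $r = i_2-i_1+1$. Since $u_1$ has no jumps in $\bigcup_i N_i$ and none in $J_{i_1},\ldots,J_{i_2}$, it is equal to a single constant $c$ on the whole stretch $N_{i_1} \cup J_{i_1} \cup \cdots \cup J_{i_2} \cup N_{i_2+1}$. Let $p^-$ denote the nearest jump of $u_1$ to the left of this stretch (in $J_{i_1-1}$, or the point $0$ if $i_1 = 1$) and $p^+$ the nearest jump to the right. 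I would replace $u_1$ on $[p^-, p^+]$ by the function that equals $g_k$ on each $N_k$ (for $k=i_1,\ldots,i_2+1$) and jumps between the pieces at $j_{i_1}, \ldots, j_{i_2}$. Performing this operation simultaneously on all maximal runs (which lie in disjoint intervals) produces $u_2$, which is piecewise constant.

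To verify $P_\gamma(u_2) \leq P_\gamma(u_1)$, I would account for the change on a single run. The modification introduces $r$ new interior jumps at $j_{i_1},\ldots,j_{i_2}$ and only alters the two bounding jumps of $u_1$ at $p^\pm$, so $J(u_2)-J(u_1) \leq r$. Setting $a_k = |c-g_k|$ for $k=i_1,\ldots,i_2+1$, the data term saves $a_k|N_k| \geq a_k(l_{\min}-2\kappa)$ on each $N_k$; a direct computation with the symmetric kernel (using $\kappa \leq l_{\min}/6$, so that the kernel supports around different jumps do not overlap) bounds $\int_{J_k}|u_2 - f| \leq m_k\kappa \leq h_{\max}\kappa$ on each interior $J_k$; and the contribution on the partial pieces of $J_{i_1-1}$ and $J_{i_2+1}$ is at most $2\kappa(a_1+a_{r+1}) \leq 2\kappa\sum_k a_k$. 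Pairing consecutive $a_k$'s via $a_k + a_{k+1} \geq |g_k-g_{k+1}| \geq h_{\min}$ gives $\sum_k a_k \geq r h_{\min}/2$, so the per-run Potts change is bounded by
\[
r\Bigl[\gamma + h_{\max}\kappa - \tfrac{1}{2} h_{\min}(l_{\min}-4\kappa)\Bigr].
\]
The hypothesis $\gamma + 6h_{\max}\kappa \leq \tfrac{1}{2} h_{\min}(l_{\min}-2\kappa)$ together with $h_{\min}\kappa \leq h_{\max}\kappa$ forces this bracket to be non-positive, and summing over runs yields $P_\gamma(u_2) \leq P_\gamma(u_1)$.

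The main obstacle I expect concerns ensuring that $u_2$ actually has a jump in each $J_i$. The boundary jump of $u_2$ at $p^-$ has height $|d^- - g_{i_1}|$, where $d^-$ is $u_1$'s value just left of $p^-$; in the degenerate case $d^- = g_{i_1}$, the original $u_1$-jump in $J_{i_1-1}$ is erased and $i_1-1$ would slip into $S(u_2)$. I would remedy this by \emph{cascading}: whenever $d^- = g_{i_1}$, I absorb $i_1-1$ into the run and extend the replacement to the enlarged super-run, iterating until the new boundary value no longer matches the adjacent $g$-value (or the endpoint of $[0,1]$ is reached); the right end is handled symmetrically. The accounting still closes, since each cascade step trades one destroyed $u_1$-jump at $p^{-(k)}$ for a new $u_2$-jump at the natural location $j_{i_1-k}$ (keeping the net jump change equal to $r$), while the freshly included $N_{i_1-k}$ carries a forced mismatch $|u_1 - g| = m_{i_1-k} \geq h_{\min}$, contributing an extra data saving of at least $h_{\min}(l_{\min}-2\kappa)$; the same bracket estimate closes with the super-run length in place of $r$.
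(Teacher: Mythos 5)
Your proposal is correct and follows essentially the same route as the paper: replace $u_1$ by $g$ on the maximal constant stretch bounded by the nearest jumps of $u_1$, pay at most $\gamma$ plus a kernel-smearing cost of order $h_{\max}\kappa$ per reinstated jump, and recoup at least $\tfrac12 h_{\min}(l_{\min}-2\kappa)$ per jump from the $N_k$'s via the pairing inequality $a_k+a_{k+1}\geq h_{\min}$, which is exactly the paper's inequality \eqref{eq:obvious_median_ineq}; all of your numerical estimates check out with room to spare. The one point where you go beyond the paper is the degenerate boundary case: the paper asserts that the modified function ``obviously'' keeps a jump in the two flanking intervals and that $J(u_2)=J(u_1)+k+s$, both of which can fail when the value of $u_1$ just outside the stretch coincides with the adjacent value of $g$, whereas your cascading absorbs such indices into the run and the extra saving of $h_{\min}(l_{\min}-2\kappa)$ per absorbed interval (far exceeding the added cost $\gamma+h_{\max}\kappa$) keeps the accounting closed, so your version is in fact the more complete argument.
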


{\em Proof.}
	Assume that $u_1$ has no jump in $J_{i_0}$ for some $i_0$. 
	{\noop Then we find integers $k \geq 0$ and $s \geq 1$ 
	such that $u_1$ is constant on $N_{i_0-k} \cup J_{i_0-k} \cup \ldots \cup J_{i_0 + s - 1} \cup N_{i_0+s},$
	such that $u_1$ jumps in $J_{i_0-k-1}$ or $N_{i_0-k}$ is the leftmost interval 
	and such that $u_1$ jumps in $J_{i_0+s}$ or $N_{i_0+s}$ is the rightmost interval.
	We denote by $x_l$ the rightmost jump point of $u_1$ in $J_{i_0-k-1}$ (or by $0,$ if $N_{i_0-k}$ is the leftmost interval)
	and by $x_r$ the leftmost jump point of $u_1$ in $J_{i_0+s}$ (or by $1,$ if $N_{i_0+s}$ is the rightmost interval). 
	We define $u_2$ by
	$u_2 = g$ on $(x_l, x_r)$ and else by $u_2=u_1.$ 
	Obviously, $u_2$ has at least one jump in each $J_{i},$ for $i = i_0-k, \ldots, i_0+s -1.$}
	Furthermore, we have
	\begin{align}\label{eq:j_estimate}
		J(u_2) \leq J(u_1) + k + s.
	\end{align}
	Now we estimate the deviation of $u_2$ to $f = K \ast g$ on $(x_l, x_r)$ to obtain
	\begin{align}\label{eq:u_2estimate}  \notag
		\| (u_2 - K \ast g)|_{(x_l, x_r)} \|_1 &= \| (g - K \ast g)|_{(x_l, x_r)} \|_1 \\ 
		&\leq (k+s+2) h_{\max} 2 \kappa \leq 6 (k+s) h_{\max} \kappa.
	\end{align}
	This is because $g$ equals $K \ast g$ on each $N_i$
	and $\|(g-K*g)|_{J_i}\|_1$ is bounded by the jump height of $g$ in $J_i$ times the length of $J_i$
	which in turn equals $2 \kappa$ by definition.
	
	In order to estimate the deviation of $u_1$ to $K \ast g$ on $(x_l, x_r)$ 
	we need the following preparation.
	Let us consider a data sequence $g_0,...,g_n$ and an arbitrary $y \in \R.$
	Using the triangle inequality, we have,  
	for two consecutive members $g_j$ and $g_{j+1},$ 
	that $|g_j - y| + |g_{j+1} - y| \geq \min_{i} |g_i - g_{i+1}|.$
	This implies
	\begin{align}\label{eq:obvious_median_ineq}
		\sum\nolimits_{i= 0}^n |g_i - y| \geq \tfrac{n}{2} \min\nolimits_{i} |g_i - g_{i+1}|.
	\end{align}
	Now we come back to the estimate of $\| (u_1 - K \ast g)|_{(x_l, x_r)} \|_1.$
	 Recalling that $u_1$ is constant on $(x_l, x_r)$ 
	 (with possibly $x_l=0$ and $x_r=1$ as explained above)
	 and that $f = K*g$ equals $g$ on each $N_i,$
	 we see that
	{\noop \begin{align}\label{eq:u_1estimate1}
		\| (u_1 - K \ast g)|_{(x_l, x_r)} \|_1 \geq \sum_{i = i_0-k}^{i_0 + s} \|(u_1 - K*g)|_{N_i}\|_1 
			\geq  \min_i{|N_i|} \sum_{i = i_0-k}^{i_0 + s} |y - g_i| 
    \end{align}
    where} $y$ is the function value of $u_1$ on $(x_l, x_r)$ 
    and $g_i$ is the function value of $g = K*g$ on $N_i,$ where $g$ is constant on.
    We employ \eqref{eq:obvious_median_ineq} to get
		\begin{align}\label{eq:u_1estimate2}
		 \sum\nolimits_{i = i_0-k}^{i_0 + s} |y - g_i|  \geq  \tfrac12 (k+s) \min_{i} |g_i - g_{i+1}| 
		\geq \tfrac12 (k+s) \, h_{\min}.
    \end{align}
    Plugging \eqref{eq:u_1estimate2} into \eqref{eq:u_1estimate1}
    {\noop and using $| N_i | \geq l_{\min} - 2\kappa$ we get}
    \begin{align}\label{eq:u_1estimate3}
		\| (u_1 - K \ast g)|_{(x_l, x_r)} \|_1 \geq   (l_{\min} - 2 \kappa)\tfrac{k+s}{2} h_{\min}.
    \end{align}
    
    Next we set up a lower bound for $\|u_2 - K * f\|_1$ in terms of $\|u_1 - K * f\|_1.$
    To this end,  we first restrict to the interval $(x_l, x_r)$ and obtain 
    \begin{align}\label{eq:u_2estimate_by_u_1}
    		\|(u_2 - K * g)|_{ (x_l, x_r)}\|_1 
		&\leq  (k+s) 6 h_{\max}  \kappa \notag \\
		&\leq (k+s)\p{ \tfrac12 {h_{\min} (l_{\min} - 2\kappa)} - \gamma} \notag \\
		&\leq \|(u_1 - K * g)|_{ (x_l, x_r)}\|_1 - (k+s)\gamma.
    \end{align}
    For the first inequality we used \eqref{eq:u_2estimate},
    for the second inequality \eqref{eq:assumption_lemma_at_least_one_jump},
    and for the last inequality \eqref{eq:u_1estimate3}.
    Since $u_2$ equals $u_1$ outside $(x_l, x_r)$, inequality \eqref{eq:u_2estimate_by_u_1} implies that
    \begin{align*}
    		\|u_2 - K * g\|_1 
		&= \|(u_1 - K * g)|_{[0,1] \setminus (x_l, x_r)}\|_1 +  \|(u_2 - K * g)|_{ (x_l, x_r)}\|_1 \\
		&\leq  \|(u_1 - K * g)|_{[0,1] \setminus (x_l, x_r)}\|_1 +  \|(u_1 - K * g)|_{ (x_l, x_r)}\|_1 - (k+s)\gamma \\
		&= \|(u_1 - K * g)\|_1 - (k+s)\gamma.
    \end{align*}
    Therefore, using \eqref{eq:j_estimate}, {\noop we have} 
    \begin{align*}
    	P_\gamma(u_2) &= J(u_2) +  \|(u_2 - K * g)\|_1  \\ 
    	&\leq J(u_1) + \gamma (k+s) +  \|(u_1 - K * g)\|_1 - \gamma(k+s) = P_\gamma(u_1). \quad
    \end{align*} 
    {\noop So far, we have shown how to modify $u_1$ such that it jumps in each of the 
    intervals $J_{i_0-k},\ldots, J_{i_0+s -1}$ without increasing $P_\gamma.$ The procedure may cancel existing jumps in $J_{i_0-k-1}$ and 
    $J_{i_0+s}$ (which only  happens if $u_1(x_l^-) = g(x_l^+)$ or $u_1(x_r^+) = g(x_r^-)$).  
    If there is only one jump in $J_{i_0-k-1}$ or $J_{i_0+s}$ and this jump is canceled, i.e., $u_2$ has no jump there,     
    we apply the procedure of this lemma a second time; this time to $u_2$ for the initial interval $J_{i_0-k-1}$ or $J_{i_0+s}.$
    Then the resulting $u'_2$ has no jump in $J_{i_0-k-1}$ or $J_{i_0+s}.$ 
    Cancellation in $J_{i_0-k}$ or $J_{i_0+s-1}$ cannot occur for $u'_2$ since $u_2(x_l^-) = g(x_l^-) \neq g(x_l^+)$
    or $u_2(x_r^+) = g(x_r^+) \neq g(x_r^-)$ this time. 
    Summing up, we found a way to introduce jumps on $J_{i_0-k},\ldots, J_{i_0+s -1}$ keeping at least one jump in the adjacent
    intervals $J_{i_0-k-1}$ and $J_{i_0+s}$ while not increasing $P_\gamma$.}
    \endproof
\begin{lemma}\label{lem:precisely_one_jump}
	Let  $u_2 \in \PC$ and assume \eqref{eq:assumption_lemma_at_least_one_jump} and {\noop $\kappa \leq \tfrac{1}{6} l_{\min}$.} 
	Then there is $u_3 \in \PC$
	such that $u_3$ has no jumps in $\bigcup_i N_i$
	and precisely one jump in each $J_i,$ and
	 \begin{equation*}
	 	P_\gamma(u_3) \leq P_\gamma(u_2), \qquad\text{if $2 \kappa h_{\max} \leq \gamma.$}
	 \end{equation*}
\end{lemma}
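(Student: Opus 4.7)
The natural strategy is to carry out the reduction in two phases. In Phase 1 I bring $u_2$ into the form guaranteed by the two preceding lemmas, and in Phase 2 I iteratively trim excess jumps inside each $J_i$ until only one remains.

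Phase 1 (reduction to the intermediate form). I would first apply \autoref{lem:shift_lemma1} to $u_2$, producing $\hat u \in \PC$ with no jumps in $\bigcup_i N_i$ and $P_\gamma(\hat u) \leq P_\gamma(u_2)$. Since hypothesis \eqref{eq:assumption_lemma_at_least_one_jump} is assumed, I would then apply \autoref{lem:at_least_one_jump} to $\hat u$, obtaining $\tilde u \in \PC$ with at least one jump in each $J_i$ and $P_\gamma(\tilde u) \leq P_\gamma(\hat u)$. The construction in the proof of \autoref{lem:at_least_one_jump} substitutes $g$ on an interval whose endpoints are pre-existing jumps of $\hat u$; since all jumps of $g$ lie in $\bigcup_i J_i$ and $\hat u$ had no jumps in $\bigcup_i N_i$, the resulting $\tilde u$ still has no jumps in $\bigcup_i N_i$.

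Phase 2 (local reduction inside each $J_{i_0}$). Fix $J_{i_0}$ in which $\tilde u$ has $m \geq 2$ jumps, at $x_1 < \ldots < x_m$, with piecewise values $v_0, v_1, \ldots, v_m$ (where $v_0 = \tilde u|_{N_{i_0}}$ and $v_m = \tilde u|_{N_{i_0+1}}$). I would remove one jump at a time by collapsing adjacent pieces: pick an index $k$ whose jump height $|v_k - v_{k-1}|$ is at most $h_{\max}$, and replace the value $v_k$ on the sub-interval $(x_k, x_{k+1})$ by $v_{k-1}$. The jump at $x_k$ then disappears, saving $\gamma$ in the jump penalty, while the data term increases by at most $(x_{k+1}-x_k)\cdot |v_k - v_{k-1}| \leq 2\kappa h_{\max} \leq \gamma$ by hypothesis. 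Iterating this step in every $J_i$ with multiple jumps produces the claimed $u_3$, which has precisely one jump in each $J_i$ and still has no jumps in any $N_i$.

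The main obstacle is ensuring that in every $J_{i_0}$ with $m \geq 2$ jumps, some adjacent pair of values $v_{k-1}, v_k$ differs by at most $h_{\max}$. My plan is to first truncate $\tilde u$ pointwise to the range $[\min g, \max g]$: since $K \ast g$ takes values in this convex hull (as $K$ is a positive, normalized kernel), pointwise projection can only decrease $\|\tilde u - K\ast g\|_1$, and it cannot create new jumps. After truncation, the outer values $v_0, v_m$ are forced to be close to $g|_{N_{i_0}}$ and $g|_{N_{i_0+1}}$ (the medians of $K \ast g$ on those intervals), so that $|v_m - v_0| \leq h_{\max}$. A pigeonhole argument then yields some adjacent pair with difference at most $h_{\max}$, provided the signed jumps are arranged to exclude large cancellations; if cancellations occur, the oscillation of $\tilde u$ forces the data term on $J_{i_0}$ to be large enough that replacing $\tilde u$ on a central sub-interval by the best constant (median of $K \ast g$ there) gives a net decrease of the Potts value, which can be inserted as an alternative reduction step.
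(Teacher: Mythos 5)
Your Phase 1 is fine and matches the paper. The problem is Phase 2: the whole argument hinges on finding, in every $J_{i_0}$ with $m\geq 2$ jumps, an adjacent pair of values with $|v_k-v_{k-1}|\leq h_{\max}$, and this simply need not exist. Truncating to $[\min g,\max g]$ does not help when $\max g-\min g>h_{\max}$ (e.g.\ $g$ taking levels $0,1,2$ with unit jumps, while $\tilde u$ oscillates between $0$ and $2$ inside $J_{i_0}$); nor does it force $v_0,v_m$ to be near the values of $g$ on the neighboring $N_i$ --- that is a separate reduction which the paper carries out explicitly (the construction of $u'$ with $u'=g$ on every $N_i$, needed also so that the final surgery creates no new jumps at $\partial J_i$), and which you only assert. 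Your fallback for the cancellation case is also not a proof: wild oscillation confined to sub-intervals of tiny measure contributes arbitrarily little to the data term, so "oscillation forces the data term to be large" is false; and replacing by a best \emph{constant} on a central sub-interval costs up to $2\kappa(\max g-\min g)$, which is not controlled by $\gamma\geq 2\kappa h_{\max}$.

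The paper sidesteps all of this by doing the replacement wholesale rather than jump-by-jump: after arranging $u'=g$ on the $N_i$'s, on each $J_i$ where $u'$ has more than one jump it simply puts $u_3:=g$ on all of $J_i$ (one jump, at $j_i$). The new data term on $J_i$ is $\|(g-K\ast g)|_{J_i}\|_1\leq 2\kappa h_{\max}\leq\gamma$ --- bounded by the \emph{local} jump height of $g$, independently of what $u'$ looked like and of how its old data term compares --- while at least one jump penalty $\gamma$ is saved. No case analysis, no pigeonhole. If you want to salvage your incremental scheme you would have to replace the "collapse to a neighboring value" step by "collapse to the value of $g$", at which point you have essentially reconstructed the paper's argument; as written, the proposal has a genuine gap.
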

\begin{proof}
	By Lemma \ref{lem:shift_lemma1} we may assume that $u_2$ has no jump in $\bigcup_i N_i$
	and, by Lemma \ref{lem:at_least_one_jump}, that $u_2$ has at least one jump in each $J_i.$
	
	We want to find $u'$  such that $u'$ agrees with $g$ on $N_i$ for all $i$
	and such that $P_\gamma(u') \leq P_\gamma(u_2).$ 
	To this end, we denote by $l_i$ the rightmost jump point of $u_2$ in $J_{i-1}$ 
	and by $r_i$ the leftmost jump point of $u_2$ in $J_{i}.$
	We define $u'$ by $u'= g_i$ on $(l_i, r_i)$ and by $u'= u_2$ elsewhere.
	Here $g_i$ is the value of $g$ on $N_i.$ 
	The number of jumps of $u'$ 
	does not exceed that
	of $u_2.$
	We first observe that, since $|N_i| \geq l_{\min} - 2 \kappa$, 
	\[
		\| (g - u_2)|_{N_i} \|_1 \geq (l_{\min} - 2 \kappa) |g_i - (u_2)_i|,
	\]	
	where $(u_2)_i$ is the value of $u_2$ on $N_i.$
	Using this estimate and noting that $\| (g - u')|_{N_i} \|_1 = 0$ we obtain
	\begin{equation}\label{eq:u_prime_estimate}
	\begin{split}
		\| (g - u')|_{(l_i,r_i)} \|_1 
		=& \| (g - u')|_{(l_i,r_i) \setminus N_i} \|_1 \\
		\leq&  \| (g - u_2)|_{(l_i,r_i) \setminus N_i} \|_1 + \| (u_2 - u')|_{(l_i,r_i) \setminus N_i} \|_1  \\
		 &+ \| (g - u_2)|_{N_i} \|_1 - (l_{\min} - 2 \kappa) |g_i - (u_2)_i|.
	\end{split}
	\end{equation}
	Since $u' = g_i$ on $(l_i, r_i)$ and $|(l_i,r_i) \setminus N_i| \leq 4 \kappa$ we have
	\[
		  \| (u_2 - u')|_{(l_i,r_i) \setminus N_i} \|_1 \leq 4 \kappa |g_i - (u_2)_i|. 
	\]
	Plugging this into \eqref{eq:u_prime_estimate} we get
	\[
		\| (g - u')|_{(l_i,r_i)} \|_1 \leq \| (g - u_2)|_{(l_i,r_i)} \|_1 + (6 \kappa - l_{\min}) |g_i - (u_2)_i| \leq \| (g - u_2)|_{(l_i,r_i)} \|_1,
	\]
	{\noop since an assumption of the lemma was that $\kappa \leq \tfrac{l_{\min}}{6}.$}
	Therefore, $P_\gamma(u') \leq P_\gamma(u_2).$ 
	{\noop Proceeding likewise for all intervals $N_i$ (where we let $l_i=0$ and $r_i=1$ for the boundary intervals)
	we get $u'$ with the desired property.}
	
	Now we define the function $u_3$ and show that its Potts-value $P_\gamma(u_3) \leq P_\gamma(u').$
	To that end, we consider the $k$ intervals $J_i = [l_i, r_i]$ were the $u'$ has more than one jump and
	 define $u_3$ as follows: we set $u_3 =	g(l_i^-)$ on $(l_i, \frac{r_i + l_i} 2),$ 
		$u_3 = g(r_i^+)$ on $(\frac{r_i + l_i} 2, r_i)$ and $u_3 = u'$ elsewhere.
    Then, 
    $J(u_3) \leq J(u') - k.$
    Furthermore, on each such interval $J = [l, r],$ where $u'$ has more than one jump,
    we have that
    \begin{align}\label{eq:Newww}
    		\| (K * g - u_3)|_{J} \|_1 \leq \| (K * g - u')|_J\|_1 + 2 \kappa h_{\max}.
    \end{align}
  {\noop This is because $|K * g - u_3| \leq h_{\max}$ on $J$ 
  since both functions $K \ast g$ and $u_3$ take their values in the interval $[g(l_i^-),g(r_i^+)]$ whose length does not exceed $h_{\max}.$}  
 	Summing over all the $k$ intervals where $u'$ jumps more than once we get using \eqref{eq:Newww}
	\begin{align*}
		P_\gamma(u_3) 
		&= \gamma J(u_3) + \| (K * g - u_3) \|_1 \\
		&\leq \gamma (J(u') - k) + \| (K * g - u')|\|_1 + k \cdot 2 \kappa h_{\max} \\
		&= P_\gamma(u') + k (2 \kappa h_{\max} -\gamma) \leq P_\gamma(u'),
	\end{align*}
	since by hypothesis  $2 \kappa h_{\max} \leq \gamma.$
	Summing up, we have found a function $u_3$ with $P_\gamma(u_3) \leq P_\gamma(u') \leq P_\gamma(u_2),$
	which has exactly one jump in each interval $J_i.$
\end{proof}

Recall that the minimal and maximal jump height of $g$ 
are denoted by $h_{\min}$ and $h_{\max},$ respectively.
Furthermore, $l_{\min}$ is the minimal interval length between two jumps of $g$; see \autoref{fig:notation}.
\begin{theorem}\label{thm:deconvolution_main_extended}
	Let $g$ be a piecewise constant function on $[0,1]$ and $f = K * g$ 
	with $K$ as in Assumption \ref{as:KernelK}.
    If the support size $\kappa$ of the convolution  kernel  $K$ satisfies
    \begin{align}\label{eq:kappa_constraint}
    		\kappa \leq \frac{h_{\min} l_{\min}}{2 (8 h_{\max} + h_{\min})}
    \end{align}
    then $g$ is the unique minimizer of  the Potts functional $P_\gamma$ associated with $f,$ 
    i.e.,
      \begin{align}\label{eq:potts_functional_in_deconv_thm}
    		P_{\gamma}(g) = \inf_{u \in \PC} P_{\gamma}(u) = \inf_{u \in \PC} \gamma J(u) + \| u - f\|_1,
    \end{align}
    for any Potts parameter $\gamma$ satisfying
    \begin{align}\label{eq:gamma_constraint}
    		2\kappa h_{\max} \leq \gamma \leq \tfrac12 h_{\min} l_{\min} - (h_{\min} + 6 h_{\max}) \, \kappa. 
    \end{align}
\end{theorem}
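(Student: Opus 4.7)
The plan is to chain \autoref{lem:shift_lemma1}, \autoref{lem:at_least_one_jump}, and \autoref{lem:precisely_one_jump} in succession so as to reduce an arbitrary competitor $u \in \PC$ to a very rigid candidate $u_3$ (one jump per $J_i$, no jumps in any $N_i$, constant values matching those of $g$), and then to compare $u_3$ with $g$ directly, exploiting the symmetry of $K$. First I would check that \eqref{eq:kappa_constraint} and \eqref{eq:gamma_constraint} deliver all hypotheses of the three lemmas. The upper bound on $\gamma$ in \eqref{eq:gamma_constraint} is a rearrangement of \eqref{eq:assumption_lemma_at_least_one_jump}, and the lower bound $\gamma \geq 2\kappa h_{\max}$ is the hypothesis of \autoref{lem:precisely_one_jump}. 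Compatibility of these two bounds on $\gamma$ rearranges into $2(8 h_{\max} + h_{\min})\kappa \leq h_{\min} l_{\min}$, which is \eqref{eq:kappa_constraint} verbatim; and since $h_{\max} \geq h_{\min}$ this also yields $\kappa \leq l_{\min}/18$, so in particular $\kappa \leq l_{\min}/6$ and the intervals $J_i$ and $N_i$ are pairwise disjoint.

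Next, three successive applications of the lemmas produce, for any $u \in \PC$, a function $u_3 \in \PC$ with $P_\gamma(u_3) \leq P_\gamma(u)$ having no jumps in $\bigcup_i N_i$ and exactly one jump $x_i \in J_i$ per $J_i$. Writing $g_i$ for the value of $g$ on the piece containing $N_i$, the construction in \autoref{lem:precisely_one_jump} forces $u_3 = g_i$ on every $N_i$; since $u_3$ is constant on each maximal piece $(x_{i-1}, x_i) \supset N_i$, it takes the value $g_i$ throughout that piece. (Alternatively: $f = g_i$ on $N_i$ because $\int K = 1$ and $\supp K \subset [-\kappa,\kappa]$, while the complement of $N_i$ in $(x_{i-1},x_i)$ has measure at most $4\kappa < \tfrac12(l_{\min}-2\kappa)$, so $g_i$ is the unique $L^1$-best constant approximation of $f$ on that piece.)

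It remains to compare $u_3$ with $g$. Both have $n$ jumps and agree outside $\bigcup_i J_i$, hence the jump penalty cancels and the data-term difference localises to $\bigcup_i J_i$. Using $K(-t) = K(t)$, $\int K = 1$, and the fact that $x + \supp K$ meets no jump of $g$ other than $j_i$ whenever $x \in J_i$ (which again uses $\kappa \leq l_{\min}/6$), one computes
\begin{align*}
f(j_i + s) - \tfrac{g_i + g_{i+1}}{2} = (g_{i+1} - g_i) \int_0^s K(t)\, dt \qquad \text{for } s \in (-\kappa,\kappa),
\end{align*}
which vanishes at $s=0$ and, thanks to the strict positivity of $K$ near the origin, is \emph{strictly} nonzero for $s \neq 0$ with sign matching $s(g_{i+1} - g_i)$. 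Taking $g_{i+1} > g_i$ and $x_i > j_i$ (the other sign combinations are analogous), this gives
\begin{align*}
\int_{j_i}^{x_i} \bigl(|g_i - f(t)| - |g_{i+1} - f(t)|\bigr)\, dt
= 2 \int_{j_i}^{x_i} \Bigl(f(t) - \tfrac{g_i + g_{i+1}}{2}\Bigr)\, dt > 0,
\end{align*}
and summing over $i$ yields $\|u_3 - f\|_1 > \|g - f\|_1$ whenever $u_3 \neq g$. Combined with $P_\gamma(u) \geq P_\gamma(u_3)$, this proves both minimality and uniqueness of $g$. The main obstacle, in my view, is not the combinatorial reduction but this last strict-inequality step: the three lemmas leave open a continuum of jump locations inside each $J_i$, and it is precisely the strict positivity of $K$ near $0$ — beyond mere symmetry and compact support — that selects $j_i$ as the \emph{unique} optimum by ruling out any plateau of $f$ at height $(g_i+g_{i+1})/2$.
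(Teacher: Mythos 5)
Your proof is correct and follows the paper's own route essentially step for step: the same parameter-compatibility check, the same chaining of \autoref{lem:shift_lemma1}, \autoref{lem:at_least_one_jump}, and \autoref{lem:precisely_one_jump} to reduce to competitors with exactly one jump per $J_i$ and values $g_i$ on each piece, and the same final strict comparison on each $J_i$ via the symmetry and strict positivity of $K$ (your explicit formula $f(j_i+s)-\tfrac{g_i+g_{i+1}}{2}=(g_{i+1}-g_i)\int_0^s K$ is just a computational rendering of the paper's midpoint-plus-monotonicity argument in \eqref{eq:mitte_ist_mitte}--\eqref{eq:J_ineq_not_strict}). No gaps.
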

\begin{proof}
  {\noop If $g$ is constant, $f=g$ for any kernel of any size and the assertion of the theorem 
  is obviously true. So we may assume that $g$ has at least one jump.} 
    
	We first notice that \eqref{eq:kappa_constraint} implies $\kappa \leq \tfrac{1}{6} l_{\min},$
	which we assumed at the beginning of this section.	By \eqref{eq:kappa_constraint} we get
	\begin{align*}
		2 \kappa h_{\max} \leq \tfrac12 h_{\min} l_{\min} - (h_{\min} + 6 h_{\max}) \, \kappa,
	\end{align*}
	which allows us to choose $\gamma$ according to \eqref{eq:gamma_constraint}.
	Manipulating the right hand inequality of \eqref{eq:gamma_constraint} yields that
	\begin{align*}
		 6 h_{\max} \kappa \leq \tfrac12 h_{\min} (l_{\min} - 2 \kappa) - \gamma.
	\end{align*}
	This is precisely the assumption on $\kappa$ and $\gamma$ in Lemma \ref{lem:at_least_one_jump}.
	Furthermore, the left hand inequality of \eqref{eq:gamma_constraint} ensures
	that Lemma \ref{lem:precisely_one_jump} is applicable.
	Now Lemma \ref{lem:shift_lemma1} allows us to search the minimizer of $P_\gamma,$ 
	cf. \eqref{eq:potts_functional_in_deconv_thm},
	in the class of functions of piecewise constant functions without
	jumps in $\bigcup_i N_i.$
	Then, by Lemma \ref{lem:at_least_one_jump} and Lemma \ref{lem:precisely_one_jump},
	we may restrict this class further to those functions having precisely one jump in each interval $J_i,$ i.e., 
	 \begin{align}\label{eq:constrained_set}
	 	\inf_{u \in \mathrm{PC}} P_\gamma(u) = \inf \{ P_\gamma(u) : u \text{ jumps in no $N_i$ and precisely once in each $J_i$}  \}.
	 \end{align} 
Now we consider such a function $u$ 
which is a member of the set defined by the right hand side of \eqref{eq:constrained_set} 
and assume that $u$ does not equal $g$ identically.
Let us denote the jump locations of $u$ by $x_1,...,x_n$
with $x_i \in J_i.$
Without loss of generality, we may assume 
that $u$ coincides with $g$ on each $N_i,$
since otherwise, redefining $u = g_i$ on ${(x_i, x_{i+1})}$ (and probably also on the boundary intervals) yields a smaller $P_\gamma$ value (cf. beginning of the proof of Lemma \ref{lem:precisely_one_jump}).  

Since $u$ does not equal $g,$ we find at least one interval $J_i$ where
the jump locations $j_i$ of $g$ and the corresponding jump location $x_i$ of $u$ do not coincide. We show that 
\begin{align}\label{eq:u_norm_larger_than_f_norm}
	\| (u - K * g)|_{J_i}\|_1 > \| (g - K * g)|_{J_i}\|_1
\end{align}
on each $J_i$ where the jumps of $g$ and $u$ do not coincide.
This implies, as $g$ and $u$ have the same number of jumps, that 
$P_\gamma(u) > P_\gamma(g).$

To complete the proof it remains to show  \eqref{eq:u_norm_larger_than_f_norm}.
To this end, we denote the left hand and right hand boundary of $J_i$ by $l$ and $r,$
respectively. We note that $g(l) = u(l^-),$ $g(r) = u(r^+),$
and $g$ jumps at $\tfrac{1}2 (l+r) = j_i.$ Recall that, by assumption, the jump location $x_i$ of $u$ fulfills $x_i \neq j_i.$
The symmetry of $K$ implies that the mass of $K$ on the negative half axis equals that on the positive half axis, which in turn yields
\begin{align}\label{eq:mitte_ist_mitte}
	K * g (\tfrac{1}{2}(l+r)) = \tfrac{1}{2} (g(r) + g(l)).
\end{align}
Since $K$ is positive, $K * g$ is monotone on $J_i.$ Together with the 
above equation \eqref{eq:mitte_ist_mitte}, this implies
\begin{align}\label{eq:J_ineq_not_strict}
	\| (g - K * g)|_{J}\|_1 \leq \tfrac12 |g(r) - g(l)| \cdot |J|, \qquad\text{ for any interval $J\subseteq J_i.$}
\end{align}
Furthermore, as we assume that $K > 0$ in a neighborhood of the origin
(cf. Assumption~\ref{as:KernelK}),     
$K*g$ is strictly monotone in a neighborhood of $j_i.$
This shows that inequality \eqref{eq:J_ineq_not_strict} is strict,
i.e., we may replace the symbol \enquote{$\leq$} in \eqref{eq:J_ineq_not_strict}  by \enquote{$<$}.
On the interval $I'$ which is given by the endpoints $x_i$ and $j_i$
it holds that $|u(x) - K*g(x)| > \frac12 |g(r) - g(l)|,$ for almost all $x.$
{\noop This follows from inspection of all possible cases: 
if $u(l)=g(l)< g(r)=u(r)$ and $x_i<j_i,$ 
then $K\ast g < \frac12 (g(r) + g(l)) $ on $I'$ and 
$u = g(r)$ on $I'$; hence, $u- K \ast g$ $> \frac12 (g(r) - g(l))$ on $I'.$ The other three cases are analogous. 
Summing up,} 
\begin{align}\label{eq:LastToInsert}
	\| (K * g - u)|_{I'} \|_1 > \tfrac{1}{2} |g(r) - g(l)| \cdot |I'| > \| (K * g - g)|_{I'} \|_1.
\end{align}
As assumed above, $u$ equals $g$ are on all $N_i,$ and both $u$ and $g$ jump precisely once in each $J_i.$ 
Then \eqref{eq:LastToInsert} implies that $u$ equals $g$ on $[0,1]$ for the following reason.
Otherwise, there were $J_i$ and a (proper) subinterval $I' \subset J_i$ (with positive Lebesgue measure)  
such that \eqref{eq:LastToInsert} holds true. This would imply \eqref{eq:u_norm_larger_than_f_norm} which, in turn, would contradict $u$ being a minimizer.
Hence, any minimizer equals $g$ which completes the proof.

\end{proof}

The next statement is the discrete version of \autoref{thm:deconvolution_main_extended}.
\begin{theorem}\label{thm:deconvolution_main_extended_discr}
	Let $g$ be a piecewise constant function on $[0,1]$ and let 
  $\kappa$ be the support size of the convolution kernel $K$ 
  fulfilling Assumption \ref{as:KernelK}.
  We recall that 
	$P^k_\gamma$ is the discrete Potts functional given by \eqref{eq:potts_fully_discrete} and is associated with the discretization
	set $X^k=\{x_1,\ldots,x_n\}$ of mesh size $\eta = \max_i |x_i-x_{i-1}|.$ We assume that    
    \begin{align}\label{eq:kappa_eta_constraint_disc}
    		\kappa+\eta \leq \frac{h_{\min} l_{\min}}{2 (8 h_{\max} + h_{\min})}.
    \end{align}
    Let $g^k= \sum_i \alpha_i \indfunc_{(s_i,s_{i+1})}$  be a \enquote{$X^k$-discretization}       
   of   $g = \sum_i \alpha_i \indfunc_{(t_i,t_{i+1})}$,
    where $s_i \in X^k$ is a nearest neighbor of the jump location $t_i$ of $g$.
    Furthermore, we consider data $f^k =$ $\sum_j S_k f(j) \cdot \indfunc_{I_j}$ obtained
    from sampling $f = K * g$ on the discretization set $X^k$ using 
    \eqref{eq:sampling_integral} or \eqref{eq:sampling_point} (with midpoint sampling).   
    Then each function $g^k$ is a minimizer of the discrete Potts functional $P^k_\gamma$ associated with data $f^k,$  i.e.,
      \begin{align}\label{eq:potts_functional_in_deconv_thm_disc}
    		P^k_{\gamma}(g^k) = \inf_{u \in \PCk} P^k_{\gamma}(u) = \inf_{u \in \PCk} \gamma J(u) + \| u - f^k\|_1,
    \end{align}
    for any Potts parameter $\gamma$ satisfying
    \begin{align}\label{eq:gamma_constraint_disc}
    		2(\kappa+\eta )h_{\max} \leq \gamma \leq \tfrac12 h_{\min} l_{\min} - (h_{\min} + 6 h_{\max}) \, (\kappa+\eta). 
    \end{align}
    The $X^k$-discretizations $g^k$ of the above type are the only minimizers, 
    hence the minimizer is unique whenever each jump location $t_i$ of $g$ has a unique nearest neighbor in $X^k.$   
    Furthermore, if $(X^k)$ is a nested sequence of discretization sets being eventually dense, each sequence of 
    minimizers $(g^k)$ converges to $g.$   
\end{theorem}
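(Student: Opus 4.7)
The plan is to mimic the proof of \autoref{thm:deconvolution_main_extended}, absorbing both the kernel support $\kappa$ and the mesh size $\eta$ into a single ``effective'' support radius $\kappa + \eta$. Specifically, I would redefine the intervals $J_i = [j_i - (\kappa+\eta), j_i + (\kappa+\eta)]$ and $N_i$ as in \eqref{eq:defJi}--\eqref{eq:defNi} but with $\kappa$ replaced by $\kappa+\eta$. Assumption \eqref{eq:kappa_eta_constraint_disc} ensures $\kappa+\eta \leq l_{\min}/6$, so the $J_i$ remain disjoint and the geometric setup of the continuous proof carries over. The extra $\eta$ accounts for two discrete effects: (i) the jump locations of any $u \in \PCk$ are snapped to $X^k$ and can miss the ``ideal'' location by up to $\eta$, and (ii) the outermost sampling intervals $I_j$ adjacent to a jump of $g$ contribute an error of order $\eta \cdot h_{\max}$ when passing from $f$ to $f^k$.

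First I would verify that on each $N_i$ the sampled data $f^k$ coincides with $g^k$: by the kernel support assumption, $f = K\ast g$ equals $g$ on $[j_{i-1}+\kappa, j_i - \kappa]$, and since $N_i$ is inset by a further $\eta$ on each side, every sampling cell $I_j \subset N_i$ lies entirely within a constancy interval of $g$, so either point sampling \eqref{eq:sampling_point} or integral sampling \eqref{eq:sampling_integral} reproduces $g$ exactly on $N_i$. Next I would re-prove discrete analogs of Lemmas \ref{lem:shift_lemma1}, \ref{lem:at_least_one_jump}, and \ref{lem:precisely_one_jump} for $u \in \PCk$. The shifting and replacement arguments adapt verbatim once one restricts to jump positions in $X^k$: a jump of $u$ inside some $N_i$ may be removed by replacing $u$ by its left or right value on $N_i$ (both operations preserve the constraint $u \in \PCk$), and jumps within some $J_i$ may be collapsed to a single jump at a nearest $X^k$-point to $j_i$. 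The inequalities on the jump penalty $\gamma$ that powered the continuous lemmas are precisely \eqref{eq:gamma_constraint_disc} after substituting $\kappa \mapsto \kappa+\eta$.

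Having reduced to candidates $u \in \PCk$ with no jump in $\bigcup_i N_i$ and exactly one jump $x_i \in J_i \cap X^k$ per $J_i$, the analog of the final argument in the continuous proof finishes the job. Using positivity and symmetry of $K$, the function $K\ast g$ is strictly monotone on $J_i$ and attains the midvalue $\tfrac12(g(r)+g(l))$ at $j_i$. Consequently $\|u - f\|_{L^1(J_i)}$ is a strictly decreasing function of the distance from $x_i$ to $j_i$; the same holds for $\|u - f^k\|_{L^1(J_i)}$ because on each $I_j$ both $u$ and $f^k$ are constant and the sampled value $(f^k)_j$ preserves the monotone ordering inherited from $f$. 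Hence the unique optimal choice for $x_i$ is a nearest neighbor of $t_i$ in $X^k$, which is exactly how $g^k$ is constructed. This proves \eqref{eq:potts_functional_in_deconv_thm_disc} and the uniqueness statement. The final convergence claim is then immediate: since each jump of $g^k$ differs from the corresponding jump of $g$ by at most $\eta \to 0$ and the jump heights agree, $\|g^k - g\|_1 \leq \eta \cdot h_{\max} \cdot J(g) \to 0$.

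The main obstacle I anticipate is the careful bookkeeping of the sampling error in the analog of \autoref{lem:at_least_one_jump}: the estimate \eqref{eq:u_2estimate} bounds $\|(g - K\ast g)|_{J_i}\|_1$ by $h_{\max}\cdot 2\kappa$ in the continuous case, but here the relevant quantity is $\|(g^k - f^k)|_{J_i}\|_1$, and one must verify that passing through the two approximation steps (smoothing by $K$ and then sampling by $S_k$) inflates this bound by at most a factor corresponding to the $\eta$-enlargement of $J_i$. This is the only place where the specific form of the sampling operator plays a role, and it is the reason the inflation $\kappa \mapsto \kappa+\eta$ appears linearly in \eqref{eq:kappa_eta_constraint_disc} and \eqref{eq:gamma_constraint_disc}.
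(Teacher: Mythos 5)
Your proposal is correct and follows essentially the same route as the paper: replace $\kappa$ by $\kappa+\eta$ in the definitions of $J_i$, $N_i$ and in Lemmas \ref{lem:shift_lemma1}--\ref{lem:precisely_one_jump}, then use the symmetry and positivity of $K$ to show that, among the remaining candidates, the $L^1$ error on each $J_i$ is minimized exactly when the jump is snapped to a nearest grid neighbor of $t_i$ (the paper makes this last step precise by distinguishing the cases of one versus two nearest neighbors of $t_i$ in $X^k$ and comparing $|u-f^k|$ with the threshold $b=\tfrac12|g(t_i^-)-g(t_i^+)|$). One minor slip: the $L^1$ error on $J_i$ is a strictly \emph{increasing}, not decreasing, function of the distance from the jump of $u$ to $t_i$, as your own conclusion requires.
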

\begin{proof}
The proof of \autoref{thm:deconvolution_main_extended_discr} uses arguments analogous to those in the proof of 
\autoref{thm:deconvolution_main_extended}. In particular, the statements and proofs of 
Lemma \ref{lem:shift_lemma1}, Lemma \ref{lem:at_least_one_jump}, and Lemma \ref{lem:precisely_one_jump}
remain true in the setup
of \autoref{thm:deconvolution_main_extended_discr} if we replace each occurrence of $\kappa$ by $\kappa+\eta$
in these lemmas, their proofs and in the definition of $N_i$ and $J_i$ in  \eqref{eq:defJi} and \eqref{eq:defNi}, respectively.
For the rest of this proof, we define $N_i$ and $J_i$ accordingly with $\kappa+\eta$ replacing $\kappa.$ 
Therefore, it is sufficient to show
\eqref{eq:potts_functional_in_deconv_thm_disc} 
for the functions $u$ in $\PCk$
which jump in no $N_i$ and precisely once in each $J_i.$  Thus, let us consider a function $u$,
which we may assume, as in the proof of  \autoref{thm:deconvolution_main_extended},
to coincide with $g$ on each $N_i.$ 
{\noop We denote the jump location of $u$ in $J_i$ by $z_i$
 and the jump location of $g$ in $J_i$ by $t_i.$ 
 We use the notation $S_i$ for the set of nearest neighbors of $t_i$ in the $k$th level discretization set $X^k$ 
 (which lie in $J_i$ since we used $\kappa+\eta$ in the definition of $J_i$ at the beginning of this proof).
 We consider intervals $J_i,$ where $z_i \notin S_i,$ and show that
 \begin{align}\label{eq:u_norm_larger_than_f_norm_discA}
	\| (u - f^k)|_{J_i}\|_1 > \| (g^k - f^k)|_{J_i}\|_1.
 \end{align}
 Furthermore, we consider an additional $X^k$-discretization $\tilde{g}^k$ (with the same properties as $g^k$ but possibly different jump locations in $S_i$)
 and show that, on each $J_i,$  
 \begin{align}\label{eq:u_norm_larger_than_f_norm_discB}	
	\| (g^k - f^k)|_{J_i}\|_1 = \| (\tilde{g}^k - f^k)|_{J_i}\|_1.
\end{align} }
Since the number of jumps of $u,$ $g^k$ and $\tilde{g}^k$ are equal, 
\eqref{eq:u_norm_larger_than_f_norm_discA} and \eqref{eq:u_norm_larger_than_f_norm_discB} yield \eqref{eq:potts_functional_in_deconv_thm_disc}.

{\noop
In order to show \eqref{eq:u_norm_larger_than_f_norm_discA} and \eqref{eq:u_norm_larger_than_f_norm_discB}, we first note that, by the proof of
\autoref{thm:deconvolution_main_extended}, $f(t_i)$ $= K \ast g (t_i) $ $= \tfrac{1}{2} (g(t_i^-)+g(t_i^+))$ $=:a$ and
$f$ is monotone and strictly monotone in a neighborhood of $t_i.$ 
So, if $t_i$ has two nearest neighbors $l,r$ in $S_i,$ then the symmetry of $K$ implies that $f^k$ equals $a =\tfrac{1}{2} (g(t_i^-)+g(t_i^+))$
on the interval $(l,r).$ This is because $K \ast g (t_i-x) + K \ast g (t_i+x) = 2 a$ on $(l,r)$ and thus 
both the integral sampling \eqref{eq:sampling_integral} and the sampling \eqref{eq:sampling_point} using midpoints yield that $f^k$ equals $a$ on $(l,r)$. 
Furthermore, $f^k$ is monotone on $J_i$ since so is $f.$
(We recall that, in the beginning of the proof, we defined $J_i$  with $\kappa+\eta$ replacing $\kappa$ in \eqref{eq:defJi}.)
 Hence, $|g^k-f^k| \leq b $ $:= \tfrac{1}{2} |g(t_i^-)-g(t_i^+)|.$
Also, $f^k$ jumps at $l$ and $r$ since $f$ is strictly monotone in a neighborhood of $t_i.$
Hence, if the jump point $z_i$ of $u$ is not in  $S_i = \{l,r\},$ then  $|u - f^k| > b $ (a.e.) on the interval $B$ connecting $z_i$ 
and $n,$ 
where $n \in \argmin_{\nu \in \{l,r\}} |\nu - z_i|.$
This follows from an argument analogous to the differentiation of cases preceding \eqref{eq:LastToInsert}.
Since $u$ equals $g^k$ on $J_i \setminus B,$ we have shown \eqref{eq:u_norm_larger_than_f_norm_discA} when $t_i$ has two nearest neighbors.
Since $g^k$ and $\tilde{g}^k$ jump at $l$ or $r$ we have that  $|g^k - f^k|= b = |\tilde{g}^k - f^k|$ on $(l,r).$ Since $g^k$ and $\tilde{g}^k$ are equal on $J_i \setminus (l,r)$ we get
\eqref{eq:u_norm_larger_than_f_norm_discB}.

If $t_i$ has only one nearest neighbor $s_i$ in $X^k,$ 
then on the one side (left or right) of $s_i,$ we have $f^k>a,$ 
and on the other side (right or left) of $s_i$,  we have $f^k<a.$
To see this, denote the other neighbor (which is not a nearest neighbor) of $t_i$ in $X^k$ by $o_i.$
As seen in the proof of \autoref{thm:deconvolution_main_extended}, $f$ is monotone and strictly monotone in a neighborhood of $t_i.$
Then, the midpoint $m_i$ of $s_i$ and $o_i$ does not equal $t_i$ and hence $f^k(t_i) \neq a$ by the strict monotonicity of $f$ near $t_i,$
for both kind of sampling operators \eqref{eq:sampling_integral} and \eqref{eq:sampling_point} with midpoint sampling. 
Furthermore, $f^k(t_i) = f^k(m_i) > a,$ if $f(s_i)< a$ since $s_i$ is closer to $t_i$ than $o_i.$
Analogously, $f^k(t_i) < a,$ if $f(s_i)> a.$ 
(Note that $f(s_i)= a$ does not happen, since we consider the case where $s_i$ is unique.)
Then, if $f(s_i)< a,$ we have $f^k<a$ on that side of $s_i$ where $t_i$ does not lie,
and analogously (with inverse inequality) for the case $f(s_i)>a.$  
Now, we proceed as in the case of nonunique nearest neighbors above.
We assume that the jump point $z_i$ on $J_i$ of $u$ does not equal $s_i.$
Then we denote the interval connecting  $z_i$ and $s_i$ by $B.$  
On $B$, we have  $|u - f^k| > b $ (a.e.) on the interval $B$ connecting $z_i$ and $s_i.$
Since $g^k = u$ on $J_i \setminus B$ and  since  $|g^k - f^k| \leq b$ on $J_i$,
we have shown \eqref{eq:u_norm_larger_than_f_norm_discA} in the case where $t_i$ has only one nearest neighbor.}
The last assertion of the theorem is a consequence of the fact that the mesh sizes of $(X^k)$ tend to $0$ as $k \to  \infty$.
\end{proof}

We note that, although the statement of the theorem holds for both kinds of sampling operators introduced 
in \eqref{eq:sampling_integral} and \eqref{eq:sampling_point} with midpoint sampling, it is more natural
to consider point sampling here since averaging might be already modeled by the kernel $K.$

\section{Reconstruction of highly blurred jump-sparse signals}\label{sec:largeSizeKernels}

\begin{figure}
\centering
\def\noiseLevel{0.1}
\def\thisfigheight{0.3\textwidth}
\def\thisfigwidth{0.43\textwidth}
\def\thisdatafile{ExperimentsRev/iterativeDeconv2/LaplaceNoise\noiseLevel.table}
\begin{tabular}{ccc}
%\noexp
   	\begin{tikzpicture}
      \begin{axis}[
        MyAxisStyle,
        title=Data
        ]
        \pgfplotsset{every axis plot post/.append style={line width = 0.5pt}} ;
        \addplot+[only marks, mark size=0.5] table[x= x,y= fNoisy] {\thisdatafile};   
     \end{axis}
    \end{tikzpicture}
        &  
    %    \noexp
     	\begin{tikzpicture}
      \begin{axis}[
        MyAxisStyle,
        title=K-$L^1$-TV
        ]
        \pgfplotsset{every axis plot post/.append style={line width = 0.5pt}} ;
        \addplot+[only marks, mark size=0.5] table[x = x,y=TVL1K] {\thisdatafile};   
        \addplot+[no marks, dashed] table[x = x,y=original] {\thisdatafile};   
     \end{axis}
    \end{tikzpicture}
   &
   %\noexp
     	\begin{tikzpicture}
      \begin{axis}[
                MyAxisStyleB,
                title= K-$L^1$-Potts
               ]
        \pgfplotsset{every axis plot post/.append style={line width = 0.5pt}} ;
        \addplot+[only marks, mark size=0.5] table[x =x,y=pottsL1K] {\thisdatafile};   
        \addplot+[no marks, dashed] table[x = x,y=original] {\thisdatafile};   
     \end{axis}
    \end{tikzpicture}
    \end{tabular}
    \figspace
    \caption{Signal blurred by a (normalized) Gaussian kernel
    and corrupted by Laplacian noise with $\sigma = \noiseLevel.$
    {\noop The minimizer of the $L^1$-TV deconvolution functional has additional jumps
    and less contrast at the large jumps.}
    The proposed $L^1$-Potts deconvolution strategy reconstructs the original signal almost perfectly.
     } \label{fig:potts_deconv_large}
     \figspace
\end{figure}
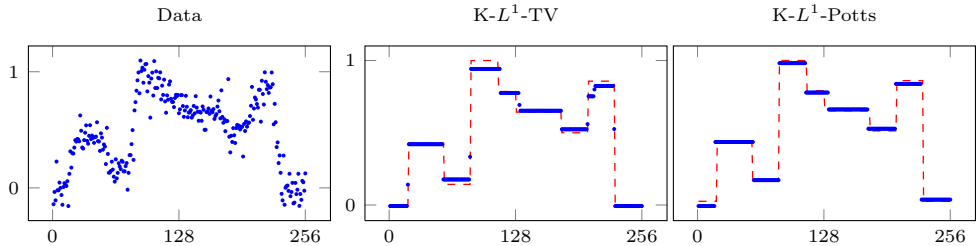

In the previous section we have seen that 
$L^1$-Potts minimization is robust to mildly blurred data.
In order to deal with severely blurred data,
we propose a heuristic splitting approach to the deconvolution problem \eqref{eq:potts_L1Conv}.
In contrast to the previous section, we need to know the kernel $K.$ 
We rewrite \eqref{eq:potts_L1Conv} as the constrained optimization problem in two variables 
\begin{align}
	\min_{u,v} \, \gamma J(u) + \| K * v - f \|_1,\quad \text{subject to}\quad u - v = 0.
\end{align} 
Then we add the constraint $u-v$ as the extra
penalty $\mu \| u-v\|_1$ to the target functional to obtain the unconstrained optimization problem 
\begin{align}\label{eq:split_functional}
	\gamma J(u) +  \mu \, \| u - v \|_1 +  \| K *  v - f\|_{1} \to \min.
\end{align}
The extra parameter $\mu >0$ controls how strongly $u$ and $v$  are tied to each other.
We approach the bivariate problem \eqref{eq:split_functional} by the following heuristic.
We first fix $v$ and minimize with respect to the variable $u;$ then we fix $u,$ and optimize 
w.r.t.\ $v.$ This yields the two univariate problems
\begin{align}\label{eq:split_functional_potts}
	\min_u \, \tfrac\gamma\mu J(u) +   \| u - v \|_1 \quad  \text{and} \quad
	\min_w \, \mu \| w \|_1 +  \| K * w - (K * u -  f)\|_{1},
\end{align}
where $w=u-v.$
The first problem is a $L^1$-Potts minimization problem for which we apply the fast algorithm developed in this paper.
The second problem is a convex optimization problem which we solve by using the corresponding algorithm in 
\cite{baritaux2011primal}; alternatives are  \cite{fu2006efficient, chambolle2011first}.
Summing up, we 
alternately minimize the two problems in \eqref{eq:split_functional_potts}
where we successively increase the coupling parameter $\mu$ (by a factor of 1.5) during the iteration.
{\noop In contrast to the rest of the paper, no convergence analysis for this method is carried out here.}

Numerical experiments comparing the results of the proposed splitting algorithm for the $L^1$-Potts deconvolution problem \eqref{eq:potts_L1Conv}
with the solutions of the $L^1$-TV problem
\begin{align}\label{eq:L1-TV-K}
	\gamma \| \nabla u \|_1 + \| K * u - f \|_1 \to {\min}
\end{align} 
can be found in \autoref{fig:potts_deconv_large}. To solve \eqref{eq:L1-TV-K} we used the algorithm proposed in \cite{clason2009duality}.  
We observe that the computed minimizer of the $L^1$-TV functional still contains noise, whereas the solution of our approach recovers the piecewise constant signal almost perfectly.

\section{Conclusion and future research}

We have seen that the $L^1$-Potts functional is particularly well suited to reconstruct piecewise constant functions under 
non-Gaussian noise.
 We  established a continuous $L^1$-Potts model with the feature that the limits of minimizers of its (possibly non-equidistant) discretizations
are minimizers of the continuous model. 
 For the solution of these discrete $L^1$-Potts functionals,
we have introduced an exact and fast algorithm.

We have shown that the $L^1$-Potts functional is robust to mild blurring.
For strongly blurred signals and known source of blurring,
we have proposed an algorithm to solve the associated $L^1$-Potts deconvolution problem. One direction of future research is to obtain analytic results for this heuristic approach.

Another direction of future research is the application of our computational strategies to higher dimensional problems. 
Minimizing the direct analogue of the Potts functional in higher dimensions is a
NP-hard problem \cite{boykov2001fast}.
However, the computational load may be reduced by restricting the set of admissible partitions.
One approach in that direction are wedgelets \cite{donoho1999wedgelets}. 
In the $L^2$ setting, a fast wedgelet algorithm has been obtained in \cite{friedrich2008efficient}.
Building on the ideas for the fast computations of medians in this paper,
the authors were able to derive a fast algorithm for the computation of $L^1$-wedgelets.
Its computational complexity lies in the order of $O(n \log n),$ 
where $n$ is the number of pixels. This result is the matter of a forthcoming paper. 
Here, a related open problem is to find fast algorithms for wider classes of admissible partitions.

\section*{Acknowledgement}
This work was supported by the German Federal Ministry for Education and Research under SysTec Grant 0315508. 
The first author acknowledges support by the Helmholtz Association within the young investigator group VH-NG-526. 
The second author acknowledges support from the European Research Council under the European Union's Seventh Framework Programme (FP7/2007-2013) / ERC grant agreement no.~267439. 
The authors would like to thank the anonymous reviewers
for their valuable comments and suggestions which helped to improve the presentation of the paper.

%\section*{Acknowledgement}
%The authors would like to thank the anonymous reviewers for their valuable suggestions which helped to improve the presentation o 

{
\bibliographystyle{siam}
\bibliography{fastPotts}
}

\end{document}